\newtheorem{theorem}{Theorem}
\newtheorem{lemma}{Lemma}
\newtheorem{corollary}{Corollary}
\newtheorem{remark}{Remark}
\newtheorem{definition}{Definition}
\newtheorem{prop}{Proposition}
\begin{document}
\renewcommand{\refname}{References}
\begin{center}
	\textbf{An extension of the standard multifractional Brownian motion}\vspace{0.3cm}
\end{center}
\begin{center}
	\textsl{M. Ait Ouahra\footnote {Corresponding author: ouahra@gmail.com},  H. Ouahhabi $^2$,  A. Sghir  $^1$ and M. Mellouk $^3$}\\\vspace*{0.1cm}
\end{center}
\begin{center}
	\textit{$^{1}$Mohammed First University. Faculty of Sciences Oujda. Department of Mathematics. Stochastic and Deterministic Modelling Laboratory. B.P. 717. Morocco.\\}	\vspace*{0.3cm}
		 \textit{$^2$Department of Statistics.
		College of Business and Economics. United Arab Emirates
		University.}\\	\vspace*{0.3cm}
	 \textit{$^3$ MAP5, CNRS UMR 8145. University of Paris. 45, Street of Saints-Pères 75270. Paris Cedex 6. France.}
	\end{center} 
\vspace*{0.2cm}
\begin{center}
	\textbf{Abstract}\vspace*{0.1cm}
\end{center}
\hspace*{0.3cm}In this paper, firstly, we generalize the definition of the   bifractional Brownian motion $B^{H,K}:=\Big(B^{H,K}\;;\;t\geq 0\Big)$, with parameters $H\in(0,1)$ and $K\in(0,1]$, to the case where  $H$ is no longer a constant, but a function $H(.)$ of the time index $t$ of the process. We denote this new  process by $B^{H(.),K}$.  Secondly, we study its time regularities, the local asymptotic self-similarity and the long-range dependence properties.\\\\
{\bf Key words:} {Gaussian process; Self similar process; Fractional Brownian motion; Bifractional Brownian motion; Multifractional Brownian motion; Local asymptotic self-similarity.}\\\\
{\bf Mathematics Subject Classification:}{ 60G15; 60G17; 60G18; 60G22}.
%--------------------------------
\section{Introduction}
%-----------------------------------------
In recent years, the famous  fractional Brownian motion $B^{H}:=\Big(B^{H}_{t}\;;\; t\geq 0\Big)$, (fBm for short), with Hurst parameter $H\in(0,1)$, has 
considerable interest due to its applications in various
scientific areas including:  telecommunications, finance, turbulence and image
processing, (see for examples:  Addison and  Ndumu \cite{AN}, Cheridito
\cite{Ch}, Comegna et \emph{al.} \cite{Cal}, Samorodnitsky and  Taqqu \cite{ST} and Taqqu \cite{MTa}). The fBm  was firstly introduced by Kolmogorov \cite{Ko}, and was later made popular by Mandelbrot and Van Ness \cite{MVN}. It is
the only centered and self-similar Gaussian process with stationary increments
 and covariance function:
 $$R^{H}(t,s):=\mathbb{E}\Big(B^H_tB^H_s\Big)=\frac{1}{2}\Big(t^{2H}+s^{2H}-|t-s|^{2H}\Big),\ \ \forall s,t\geq 0.$$
For small increments, in models such as turbulence, fBm seems a good model but it is inadequate for  large increments. For this reason, Houdr\'{e} and Villa
\cite{HV} have explored the existence of a Gaussian process which preserve some of the
properties of the fBm such as self-similarity and stationarity of small increments, and can enlarge modelling tool kit. This processes, denoted by $B^{H,K}:=\Big(B^{H,K}_{t}\;;\; t\geq 0\Big)$, is called the bifractional Brownian motion, (bfBm for short),
with parameters $H\in (0,1)$ and $K\in (0,1]$ and covariance function:
$$R^{H,K}(t,s):=\mathbb{E}\Big(B^{H,K}_tB^{H,K}_s\Big)=\frac{1}{2^{K}}\bigg[\Big(t^{2H}+s^{2H}\bigg)^K-|t-s|^{2HK}\bigg],\ \ \forall t,s\geq 0.$$
For large details about bfBm, we refer  to   \cite{ET}, \cite{HV}, \cite{LN} and \cite {RT}.\newpage
The increments of $B^{H,K}$ are  only independents
in the case of the standard Brownian motion: $(H=\frac{1}{2},K=1)$, and  they are
not stationary for any $K\in]0,1[$, except the case of the
fBm: $(K=1)$, however, $B^{H,K}$ is  quasi-helix in the
sense of Kahane \cite{Ka}:
\begin{equation}
2^{-K}|t-s|^{2HK}\leq\mathbb{E}\Big(B^{H,K}_{t}-B^{H,K}_{s}\Big)^{2}\leq
2^{1-K}|t-s|^{2HK},\hspace{0.3cm} \forall s,t\geq 0.
\end{equation}
Moreover, according to \cite{HV}, if we put:
$\sigma^{2}_{\varepsilon}(t):=\mathbb{E}\Big(B^{H,K}_{t+\varepsilon}-B^{H,K}_{t}\Big)^{2},$
then
$$\lim_{\varepsilon\rightarrow0}\frac{\sigma^{2}_{\varepsilon}(t)}{\varepsilon^{2HK}}=2^{1-K},
\;\;t>0,$$
therefore, the small increments of $B^{H,K}$   are approximately
stationary.
For the large increments, Maejima and Tudor \cite{MT} have proved that, when
$h\rightarrow+\infty$, the sequence of increments process:
$$\left(B_{t+h}^{H,K}-B_{h}^{H,K}\;;\;t\geq0\right)$$ converges modulo a
constant, in the sense of the finite dimensional
distributions, to the fBm $\left(B_{t}^{HK}\;;\;t\geq0\right)$ with Hurst parameter $HK$. This result can be interpreted like the bfBm has stationary
increments for large increments.
The key ingredient used in \cite{MT} is a
decomposition in law  of the bfBm presented by Lei and Nualart \cite{LN} as
follows:\\ Let $W:=\Big(W_{\theta}\;;\;\theta\geq0\Big)$ be a standard Brownian
motion independent of $B^{H,K}$. For any $K\in(0,1)$, let
$X^{K}:=\Big(X^{K}_{t}\;;\;t\geq0\Big)$  the centred Gaussian
process defined by:
$$X^{K}_{t}:=\int_{0}^{+\infty}\Big(1-e^{-\theta t}\Big)\theta^{-\frac{(1+K)}{2}}dW_{\theta},$$
with the covariance function:
$$\mathbb{E}\Big(X^{K}_tX^{K}_s\Big)=\frac{\Gamma(1-K)}{K}\Big[t^{K}+s^{K}-(t+s)^{K}\Big],\ \ \forall t,s\geq 0.$$
$\Gamma$  is the well known Gamma function.\\
The authors in \cite{LN}  showed by setting: $\displaystyle X^{H,K}_{t}:=X^{K}_{t^{2H}},$
that:
\begin{equation}
\left(C_{1}(K)X^{H,K}_{t}+B^{H,K}_{t}\hspace{0.1cm};\hspace{0.1cm}t\geq0\right)
\hspace{0.2cm}\underline{\underline{d}}\hspace{0.2cm}\left(C_{2}(K)B^{HK}_{t}
\hspace{0.1cm};\hspace{0.1cm}t\geq0\right),
\end{equation}
where $C_{1}(K)=\sqrt{\frac{2^{-K}K}{\Gamma(1-K)}}$,
$C_{2}(K)=2^{\frac{(1-K)}{2}}$ and $\underline{\underline{d}}$ means equality of all finite dimensional distributions. The second application of (2) given in \cite{MT} is that the long-range dependence, (LRD for short), of the process $B^{H,K}$ depends on the value of
the product $HK$:
$$\bullet\;\mbox{Long-memory: for every}\; a\in \mathbb{N}:\ \ \sum_{n\geq 0}\mbox{cor}_{B^{H,K}}(a,a+n)=+\infty,\ \ \mbox{if}\ \ 2HK>1,$$ 
$$\bullet\;\mbox{Short-memory: for every}\; a\in \mathbb{N}:\ \ \sum_{n\geq 0}\mbox{cor}_{B^{H,K}}(a,a+n)<+\infty,\ \ \mbox{if}\;2HK\le1.$$ 
where $$\mbox{cor}_{B^{H,K}}(a,a+n):=\mathbb{E}\Big[\Big(B^{H,K}_{a+1}-B^{H,K}_{a}\Big)\Big(B^{H,K}_{a+n+1}-B^{H,K}_{a+n}\Big)\Big].$$
This result was appeared also in    Remark 7 by  Russo and Tudor \cite{RT} .\newpage
Now, we are ready to introduce our new process: Since the model of the fBm $B^H$ may
be restrictive for different phenomena  due to the fact that all its interesting properties
are governed by the Hurst parameter $H$, this gave the motivation to Benassi et \emph{al}. \cite{Beal} and  L\'{e}vy-Véhel and Peltier \cite{LP} to introduce, independently, a new
model to generalize the fBm: It's the multifractional Brownian motion, (mBm for short). Contrarily to the fBm, the almost sure  H\"{o}lder exponent of the mBm is allowed to vary along the trajectory, a useful feature when one needs to model processes whose regularity evolves in time, such as Internet traffic or images. The definition of the mBm in
 \cite{LP} is based on the moving average representation of the fBm, where the constant Hurst parameter $H$ is substituted by a functional $H(.)$ as follows:
 $$\widetilde{B}^{H(t)}_t=\frac{1}{\Gamma\Big(H(t)+\frac{1}{2}\Big)}\bigg(\int_{_\infty}^{0} \bigg[(t-u)^{H(t)-\frac{1}{2}}-(-u)^{H(t)-\frac{1}{2}}\bigg]W(du)$$
 $$+\int_{0}^{t}(t-u)^{H(t)-\frac{1}{2}}W(du)\bigg),\ \ t\geq0,$$
 where $H(.):[0,\infty)\mapsto [\mu,\nu]\subset (0,1)$ is a  H\"{o}lder continuous function of exponent $\beta>0$, and $W$ is a standard Brownian motion on $\mathbb{R}$.\\
 The authors  in \cite{Beal} defined the mBm by means of the harmonisable representation of the fBm as follows:
 $$\widehat{B}^{H(t)}_t=\int_{\mathbb{R}}\frac{e^{it\xi-1}}{|\xi|^{H(t)+\frac{1}{2}}}\widehat{W}(d\xi),\ \ t\geq0,$$
 where $\widehat{W}(\xi)$ is the Fourier transform of the series representation of white noise with respect to an orthonormal basis of $L^{2}(\mathbb{R})$. From these definitions, it's easy to that mBm is a zero mean Gaussian processes whose increments are in general neither independents nor stationary. 
 It is proved by Cohen  \cite{Co} that the two representations 
 of mBm are equivalent, up to a multiplicative deterministic function. This function is explicitly given by Boufoussi et \emph{al.}  \cite{Boal}. Moreover, in Ayache et \emph{al}. \cite{Aal},  the covariance
 function of the standard mBm: (i.e. the variance a time 1 is 1), has been deduced from its harmonisable representation as follows:
 $$\mathbb{E}\Big(B^{H(t)}_tB^{H(s)}_s\Big)=D\Big(H(t),H(s)\Big)\bigg[t^{H(t)+H(s)}+s^{H(t)+H(s)}-|t-s|^{H(t)+H(s)}\bigg],$$
 where
  \[D(x,y):=\frac{\sqrt{(\Gamma(2x+1)\Gamma(2y+1))\sin(\pi x)\sin(\pi y)}}{2\Gamma(x+y+1)\sin\Big(\frac{\pi(x+y)}{2}\Big)}.\]
 Clearly, if $H(.)\equiv H$ a constant in $(0,1)$, $\displaystyle D(H,H)=\frac{1}{2}$, and we find the covariance function of the fBm $B^H$, the zero mean Gaussian process with stationary increments.\\
 In the same spirit as \cite{Beal} and \cite{LP}, since all the properties of the  bfBm $B^{H,K}$ is governed by the unique number $HK,$ we introduce in this note a generalization of $B^{H,K}$, by substituting to the parameter $H$ in the covariance function  $R^{H,K}$,  a H\"{o}lder function  $H(.):[0,\infty)\mapsto [\mu,\nu]\subset (0,1)$ with exponent $\beta>0.$ More precisely:
\begin{definition} We define a new centred  Gaussian process, starting from zero, and denoted by  $B^{H(.),K}:=\Big(B^{H(t),K}_t\;;\;t\geq0\Big)$,  by the covariance function:
$$
	R^{H(.),K}(t,s):=\Big(D(H(t),H(s)\Big)^K\bigg[\Big(t^{H(t)+H(s)}+s^{H(t)+H(s)}\Big)^{K}-|t-s|^{(H(t)+H(s))K}\bigg].
$$
\end{definition} 
\begin{remark}
Clearly, when $K=1,$ $B^{H(.),K}$ is a standard mBm. When $H(.)\equiv H$ a constant in $(0,1)$, $B^{H(.),K}$ is a bfBm with parameters $H\in (0,1)$ and $K\in (0,1]$.
\end{remark}
%-------------------------------------------------------------
\section{The existence of $B^{H(.),K}$}
%--------------------------------------------------------------
In this section we prove the existence of our process by using the same argument used in
\cite{HV}  for the bfBm.
\begin{prop}
For any $K\in(0,1]$ and $H(.):[0,\infty)\mapsto [\mu,\nu]\subset (0,1)$ a
H\"{o}lder continuous function, the covariance function $R^{H(.),K}$ appeared in Definition 1
is positive-definite.
\end{prop}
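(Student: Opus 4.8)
The plan is to reproduce, at the level of covariance functions, the subordination argument of \cite{HV}. First, for $K=1$ the kernel $R^{H(.),1}$ is exactly the covariance of the standard mBm, which is positive-definite because it is the covariance of a genuine Gaussian process (see \cite{Beal}, \cite{Aal}); so from now on $K\in(0,1)$. Put $h(t,s):=H(t)+H(s)\in(0,2)$ and
$$\sigma(t,s):=t^{h(t,s)}+s^{h(t,s)}-|t-s|^{h(t,s)},$$
so that $D(H(t),H(s))\,\sigma(t,s)$ is precisely the standard mBm covariance, hence positive-definite, and $\sigma(t,s)\ge0$ (if $t\ge s\ge0$ then $|t-s|=t-s\le t$ and $x\mapsto x^{h(t,s)}$ is nondecreasing on $[0,\infty)$). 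Next I would use the elementary identity, valid for $0<K<1$ and $x\ge0$,
$$x^{K}=\frac{K}{\Gamma(1-K)}\int_{0}^{\infty}\bigl(1-e^{-\lambda x}\bigr)\,\lambda^{-K-1}\,d\lambda,$$
applied with $x=t^{h(t,s)}+s^{h(t,s)}$ and with $x=|t-s|^{h(t,s)}$; subtracting, then expanding $e^{\lambda\sigma}-1=\sum_{n\ge1}\lambda^{n}\sigma^{n}/n!$ and multiplying by $D(H(t),H(s))^{K}$, one gets
$$R^{H(.),K}(t,s)=\frac{K}{\Gamma(1-K)}\sum_{n\ge1}\frac{1}{n!}\int_{0}^{\infty}\lambda^{\,n-K-1}\,E_{\lambda}(t,s)\,D(H(t),H(s))^{\,K-n}\,\bigl(D(H(t),H(s))\,\sigma(t,s)\bigr)^{n}\,d\lambda,$$
where $E_{\lambda}(t,s):=e^{-\lambda\left(t^{h(t,s)}+s^{h(t,s)}\right)}$.

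Since $\sigma\ge0$ the integrand is non-negative, so Tonelli's theorem justifies the interchange of sum and integral, and every $\lambda$-integral converges (the exponent $n-K-1>-1$ takes care of $\lambda\to0$, while $E_\lambda$ decays exponentially as $\lambda\to\infty$ whenever $t$ or $s$ is positive; if $t=s=0$ the corresponding row and column of the covariance matrix vanish, so this case may be discarded). Each Hadamard power $\bigl(D(H(t),H(s))\sigma(t,s)\bigr)^{n}$ of the mBm covariance is positive-definite, and positive-definiteness of kernels is preserved under Hadamard (Schur) products, positively weighted sums, and integration against a positive measure. Hence the whole proposition reduces to the single claim that, for each $\lambda>0$ and each integer $n\ge1$, the ``Gaussian envelope''
$$(t,s)\ \longmapsto\ E_{\lambda}(t,s)\,D(H(t),H(s))^{\,K-n}$$
is positive-definite; granted this, one reassembles the series and concludes $\sum_{i,j}c_ic_j R^{H(.),K}(t_i,t_j)\ge0$.

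This envelope claim is the main obstacle, and it is exactly where the multifractional case departs from the bifractional one. When $H(\cdot)\equiv H$ is constant, $h(t,s)\equiv2H$, so $E_{\lambda}(t,s)=e^{-\lambda t^{2H}}e^{-\lambda s^{2H}}$ factorises into a function of $t$ times a function of $s$ — a rank-one, hence positive-definite, kernel — while $D^{K-n}$ is merely a positive constant: the claim is then trivial and the argument collapses to that of \cite{HV}. For a genuinely varying Hölder function the exponent $h(t,s)=H(t)+H(s)$ couples the two time variables and $E_\lambda$ no longer factorises. My route would be to first peel off a rank-one part of $D$: writing $D(x,y)=N(x)N(y)/M_{0}(x+y)$ with $N(x)=\sqrt{\Gamma(2x+1)\sin(\pi x)}$ and $M_{0}(z)=2\Gamma(z+1)\sin(\pi z/2)$, the factor $D(H(t),H(s))^{K-n}$ splits as the rank-one kernel $N(H(t))^{K-n}N(H(s))^{K-n}$ times $M_{0}(H(t)+H(s))^{\,n-K}$; since $M_{0}(\cdot)^{n-K}$ is a function of $z=H(t)+H(s)$ alone, one would dispose of it by representing $z\mapsto M_{0}(z)^{\,n-K}$ as a Laplace transform $\int_{0}^{\infty}e^{-\theta z}\,d\rho(\theta)$ of a positive measure on the compact interval $[2\mu,2\nu]$, turning it into the positively weighted average $\int e^{-\theta H(t)}e^{-\theta H(s)}\,d\rho(\theta)$ of rank-one kernels. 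After that, everything hinges on showing that $(t,s)\mapsto e^{-\lambda(t^{H(t)+H(s)}+s^{H(t)+H(s)})}$ is positive-definite for every $\lambda>0$ — equivalently, that $(t,s)\mapsto t^{H(t)+H(s)}+s^{H(t)+H(s)}$ is a conditionally negative-definite kernel — which is the multifractional substitute for the trivial product identity $t^{2H}+s^{2H}$ used in \cite{HV}; this, together with the complete monotonicity of $M_{0}^{\,n-K}$ and the (routine) justification of the series manipulations, is where all the real work lies. Should this particular route meet an obstruction, the fallback would be to construct $B^{H(.),K}$ directly from an explicit Gaussian integral, in the spirit of the harmonisable representation of the mBm, where positive-definiteness of the covariance is automatic.
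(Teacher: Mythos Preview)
Your skeleton coincides with the paper's: the same identity $x^{K}=\frac{K}{\Gamma(1-K)}\int_{0}^{\infty}(1-e^{-\lambda x})\lambda^{-K-1}\,d\lambda$ and the same splitting of the integrand into an ``envelope'' times $e^{\lambda D\sigma}-1$. The paper, however, stops almost immediately after that: it observes that $D\sigma$ is the mBm covariance, hence positive-definite, so $e^{\lambda D\sigma}-1$ is positive-definite by Schur, and declares the proof complete. It does not expand into a power series, does not split $D$, and --- crucially --- never verifies that the envelope factor $e^{-\lambda D(H(t),H(s))\bigl(t^{h(t,s)}+s^{h(t,s)}\bigr)}$ is itself positive-definite, which the Schur-product argument tacitly requires. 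You have correctly identified this as the point where the genuinely multifractional difficulty sits; in the constant-$H$ case of \cite{HV} it is trivial because the envelope factorises as a rank-one kernel, whereas here it does not.

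Your proposed route to close this gap, though, has a concrete obstruction. You want to write $M_{0}(z)^{\,n-K}$, with $M_{0}(z)=2\Gamma(z+1)\sin(\pi z/2)$ and exponent $n-K>0$, as the Laplace transform of a \emph{positive} measure, so as to turn it into a positively weighted mixture of rank-one kernels. But any such representation forces $M_{0}^{\,n-K}$ to be completely monotone, in particular non-increasing, on the relevant interval; yet $M_{0}(z)\sim\pi z$ as $z\downarrow0$ and $M_{0}$ is increasing on most of $(0,1)$, so for small $\mu$ the function $M_{0}^{\,n-K}$ is increasing on part of $[2\mu,2\nu]$ and cannot be the Laplace transform of a positive measure. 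That step fails, and the residual claim --- conditional negative-definiteness of $(t,s)\mapsto t^{h(t,s)}+s^{h(t,s)}$ --- is something you explicitly leave open as well. In short: your analysis is more scrupulous than the paper's about where the work lies, but neither your program nor the paper's brief argument actually establishes the positive-definiteness of the envelope factor.
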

\begin{proof}We assume that $K\in(0,1)$ since the special case  $K=1$ is evident. We use the following identity:
  $$t^{K}=\frac{K}{\Gamma(1-K)}\int^{\infty}_{0}(1-e^{-tx})x^{-1-K}dx,\ \ \forall t\geq0,$$
 where $\Gamma$ is the gamma function. 
 For any $c_1,...,c_n \in
  \mathbb{R},$ we have:
\begin{eqnarray*}
&&\sum^{n}_{i=1}\sum^{n}_{j=1} c_i c_j R^{H(.),K}(t_i,t_j)\\
&=&\frac{K}{\Gamma(1-K)}\int^{\infty}_{0}\sum^{n}_{i=1}\sum^{n}_{j=1} c_i c_j \bigg[-e^{-xD(H(t_i),H(t_j))\Big(t_i^{H(t_i)+H(t_j)}+t_j^{H(t_i)+H(t_j)}\Big)}\\
&&\hspace*{1cm}+e^{-xD(H(t_i),H(t_j))\Big(|t_i-t_j|^{H(t_i)+H(t_j)}\Big)}\bigg]x^{-1-K}dx\\
&=&\frac{K}{\Gamma(1-K)}\int^{\infty}_{0}\sum^{n}_{i=1}\sum^{n}_{j=1} c_i c_j
e^{-xD(H(t_i),H(t_j))\Big(t_i^{H(t_i)+H(t_j)}+t_j^{H(t_i)+H(t_j)}\Big)}\\
&&\hspace*{1cm} \times\bigg[e^{xD(H(t_i),H(t_j))\Big(t_i^{H(t_i)+H(t_j)}+t_j^{H(t_i)+H(t_j)}-|t_i-t_j|^{H(t_i)+H(t_j)}\Big)}-1\bigg]x^{-1-K} dx.
\end{eqnarray*}
We know by  \cite{Aal} that
$D(H(t),H(s))\Big(t^{H(t)+H(s)}+s^{H(t)+H(s)}-|t-s|^{H(t)+H(s)}\Big)$ is
positive-definite, then so is:
$$e^{xD(H(t),H(s))\Big(t^{H(t)+H(t)}+s^{H(t)+H(t)}-|t-s|^{H(t)+H(t)}\Big)}-1,\ \ \forall x\geq0,$$
which gives the proof of the proposition. 
\end{proof}
\begin{remark}
It's easy to see the following link between the covariance function of our process $B^{H(.),K}$, the mBm $B^{H(.)K}$ and a transformation of the process $X^K$:
$$\frac{KD\Big(H(t),H(s)\Big)^K}{\Gamma(1-K)}\;cov\Big(X^K_{t^{H(t)+H(s)}},X^K_{s^{H(t)+H(s)}}\Big)+cov\Big(B^{H(t),K}_t,B^{H(s),K}_s\Big)$$
$$=\frac{KD\Big(H(t),H(s)\Big)^K}{D\Big(H(t)K,H(s)K\Big)}\;cov\Big(B^{H(t)K}_t,B^{H(s)K}_s\Big),\hspace*{0.5cm} \forall s,t\geq 0.\hspace*{2.4cm}$$
Therefore,  under the assumption of independence, when $H(.)\equiv H$ a constant in $(0,1)$, we find easily (2) the decomposition in law of the bfBm. However, in the functional case $H(.)$,  since we can not separate the variables $s$ and $t$ in $D\Big(H(t),H(s)\Big)$, then we cannot deduce a decomposition in law of our process $B^{H(.),K}$.
\end{remark}
%-----------------------------------------
\section{Regularities of the trajectories of $B^{H(.),K}$}
%------------------------------------
 In this section, we deal with the regularities of the trajectories of  $B^{H(.),K}$. We follows the same method used  in the case of the mBm, (see \cite{Boal}).  For this, we need  the following regularity of the bfBm $B^{H,K}$ with respect to the constant parameter $H$. We use (2) the decomposition in law of the bfBm $B^{H,K}$. 
\begin{prop}
\label{HH'7} Let $[a,b]\subset [0,\infty)$ and $[\alpha,\gamma]\subset (0,1],$ and consider $B^{H,K}$ a bfBm with parameters $H\in[\alpha,\gamma]$ and $K\in[0,1].$ Then, there exists a finite positive constant $C(\alpha,\gamma,K)$ such that, for all $H,H'\in[\alpha,\gamma],$ we have:
$$
\sup_{t\in[a,b]}\mathbb{E}\Big(B^{H,K}_{t}-B^{H',K}_{t}\Big)^2\leq C(\alpha,\gamma,K)|H-H'|^{2}.
$$
\end{prop}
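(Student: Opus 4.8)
The plan is to use the decomposition in law (2) to reduce the bifractional increment to a fractional one, and then to estimate the latter by a second-order Taylor expansion in the Hurst parameter. First I would realise the whole family $\{B^{H,K}\}_{H\in[\alpha,\gamma]}$ on one probability space that also carries a standard Brownian motion $W$ driving $X^K$ and independent of the family; then, for each $H$, the process $B^{HK}:=C_2(K)^{-1}\big(C_1(K)X^{H,K}+B^{H,K}\big)$ is, by (2) read at the level of covariance functions and using the independence of $W$, a fractional Brownian motion with Hurst index $HK$. Expanding $\mathbb{E}\big(B^{H,K}_t-B^{H',K}_t\big)^2$ with the help of this decomposition (the cross terms being controlled by the independence of $W$) one obtains the identity
$$\mathbb{E}\big(B^{H,K}_t-B^{H',K}_t\big)^2=C_2(K)^2\,\mathbb{E}\big(B^{HK}_t-B^{H'K}_t\big)^2-C_1(K)^2\,\mathbb{E}\big(X^{H,K}_t-X^{H',K}_t\big)^2 ,$$
so that, dropping the nonnegative second term, it suffices to prove
$$\mathbb{E}\big(B^{HK}_t-B^{H'K}_t\big)^2\le C'(\alpha,\gamma,K)\,|H-H'|^2\qquad\text{uniformly for }t\in[a,b]$$
(one could equally keep the $X^K$-term and bound it separately, by the same type of argument).

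To establish this fractional estimate I would write the quantity out explicitly from the (bi)fractional covariances,
$$\varphi(H,H',t):=\mathbb{E}\big(B^{HK}_t-B^{H'K}_t\big)^2=\big(t^{2H}+t^{2H'}\big)^K-\big(4D(H,H')\big)^K t^{(H+H')K}$$
(alternatively, this is the classical fractional estimate used throughout the multifractional literature; see \cite{Boal}, \cite{Aal}, \cite{Co}). The function $\varphi$ is nonnegative and vanishes on the diagonal $H'=H$; moreover, and crucially, its first partial derivative in $H'$ also vanishes there, since $D(x,x)\equiv\frac{1}{2}$, so that the first derivatives of $D$ vanish on the diagonal and the linear term drops out. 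Taylor's formula with Lagrange remainder then gives $\varphi(H,H',t)=\frac{1}{2}\,\partial^2_{H'}\varphi(H,\xi,t)\,(H'-H)^2$ for some $\xi$ between $H$ and $H'$, and it remains to bound $\partial^2_{H'}\varphi$ on $[\alpha,\gamma]^2\times[a,b]$. This is routine: $D$ is smooth on $[\alpha,\gamma]^2$ (the sine and Gamma factors in it are positive and bounded there, as $[\alpha,\gamma]\subset(0,1)$), and the elementary functions $t\mapsto t^{cK}(\log t)^j$ with $c\ge 2\alpha>0$ and $j=0,1,2$ are bounded on $[a,b]$ (they tend to $0$ as $t\to 0$), the a priori singular factors of type $(t^{2H}+t^{2H'})^{K-2}$ being compensated by the accompanying powers of $t$. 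Combined with the reduction above, this proves the proposition with a suitable $C(\alpha,\gamma,K)$.

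The main obstacle, and essentially the only place where there is genuine work, is this last step: verifying that the first $H'$-derivative of $\varphi$ vanishes on the diagonal (so that the estimate is quadratic, and not merely linear, in $|H-H'|$), and that $\partial^2_{H'}\varphi$ remains bounded uniformly in $t\in[a,b]$, which requires some care with the logarithmic factors and with the negative powers of $t^{2H}+t^{2H'}$ that appear when $0\in[a,b]$.
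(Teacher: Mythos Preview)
Your approach and the paper's both rest on the Lei--Nualart decomposition~(2), so the underlying idea is the same. The execution differs: the paper writes $B^{H,K}_t=C_2(K)B^{HK}_t-C_1(K)X^{H,K}_t$, applies $(a-b)^2\le 2a^2+2b^2$, and then bounds \emph{both} resulting terms --- the fBm term by citing Lemma~3.1 of \cite{Boal} (harmonizable representation), and the $X^K$ term by It\^o isometry together with the mean value theorem applied to $x\mapsto e^{-\theta t^{2x}}$. You instead exploit the independence of $W$ and the family $\{B^{H,K}\}$ to obtain the exact identity, so that the $X^K$ contribution enters with a minus sign and can simply be discarded; only the fractional estimate remains, which you prove directly by a second-order Taylor expansion in $H'$. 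Your route is a little slicker in that it bypasses the separate analysis of $X^{H,K}$; the paper's route, in exchange, produces the bound (4) for $X^{H,K}$ explicitly, which is of independent use.

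One caveat worth flagging: under \emph{your} coupling the pair $(B^{HK},B^{H'K})$ does not a priori carry the standard harmonizable joint law of \cite{Boal}, so your parenthetical ``alternatively, cite \cite{Boal}'' is not available without further justification --- your direct computation of $\varphi$ (which is correct, using Definition~1 for $\mathbb{E}(B^{H,K}_tB^{H',K}_t)$ and the covariance of $X^K$) and the Taylor argument are genuinely needed here. Also, the vanishing of $\partial_{H'}\varphi$ on the diagonal follows most cleanly from the general fact that $\varphi\ge 0$ and $\varphi(H,H,\cdot)\equiv 0$, rather than from properties of $D$; this makes the argument robust even before one checks the smoothness of $D$.
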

\begin{proof}
Using (2) and the elementary inequality: $(a-b)^2\leq 2a^2+ 2b^2$, we
obtain:
\begin{eqnarray*}
\mathbb{E}\Big(B^{H,K}_{t}-B^{H',K}_{t}\Big)^2\leq  2 C^{2}_2(K)\mathbb{E}\Big(B^{HK}_{t}-B^{H'K}_{t}\Big)^2+2 C^{2}_1(K)\mathbb{E}\Big(X^{H,K}_{t}-X^{H',K}_{t}\Big)^2.
\end{eqnarray*}
In view of Lemma 3.1 in \cite{Boal}, (see also \cite{LP}),
we know that:
\begin{eqnarray}\label{BHK7}\mathbb{E}\Big(B^{HK}_{t}-B^{H'K}_{t}\Big)^2\leq C_1(\alpha,\gamma,K)
|H-H'|^2.
\end{eqnarray} where
\[C_1(\alpha,\gamma,K)=4\sup_{t\in[a,b]}\bigg(\ \int^{1}_{0} \frac{1-\cos(t\theta)}{\theta^{2\gamma+1}} (\log(\theta))^2 dx +\int^{\infty}_{1}\frac{1}{\theta^{2\alpha+1}}(\log(\theta))^2d\theta\bigg)<+\infty.\]
Now, let us  deal with the process $X^{H,K}$. We have by the It\^{o}'s isometry:
$$\mathbb{E}\Big(X^{H,K}_t-X^{H',K}_t\Big)^2=\int^{\infty}_{0}\Big(e^{-\theta t^{2H'}}-e^{-\theta t^{2H}}\Big)^2 \theta^{-(1+K)}d\theta.$$
Without loss of generality, we suppose that $H<H'$.\\
Making use of the theorem on finite increments for the function
$x\mapsto e^{-\theta t^{2x}}$ for $x \in (H,H')$, there exists $\xi \in (H,H')$ such that:
  \begin{eqnarray*}\mathbb{E}\Big(X^{H,K}_{t}-X^{H',K}_{t}\Big)^2&=&
4|H-H'|^2 t^{4\xi}log^2(t)\int^{\infty}_{0}e^{-2\theta t^{2\xi}}\theta^{1-K} d\theta.
\end{eqnarray*}
\begin{itemize}
  \item If $t \leq 1$, then $|t\log(t)|\le e^{-1}$, and:
  \begin{eqnarray*}\mathbb{E}\Big(X^{H,K}_{t}-X^{H',K}_{t}\Big)^2 & \leq & \frac{1}{(e\alpha)^2}|H-H'|^2 \int^{\infty}_{0}e^{-2\theta t^{2\gamma}}\theta^{1-K} d\theta.
 \end{eqnarray*}
  Then \[ \mathbb{E}(X^{H,K}_{t}-X^{H',K}_{t})^2 \leq C_2(\alpha,\gamma,K)|H-H'|^2,\]
 where $$C_2(\alpha,\gamma,K)= \frac{1}{(e\alpha)^2}\sup_{t\in
[a,b]}\bigg(\int^{1}_{0}e^{-2\theta t^{2\gamma}}\theta^{1-K}d\theta+\int^{\infty}_{1}e^{-2\theta t^{2\gamma}}\theta^{1-K} d\theta\bigg)<+\infty.$$
  \item If $t\geq 1$, we obtain:
  \begin{eqnarray*}\mathbb{E}\Big(X^{H,K}_{t}-X^{H',K}_{t}\Big)^2 & \leq & C_3(\alpha,\gamma,K)|H-H'|^2.
 \end{eqnarray*}
where $$C_3(\alpha,\gamma,K)= \bigg[\sup_{t\in
[a,b]}\Big(4t^{4\gamma}log^2(t)\Big)\bigg]\bigg[\sup_{t\in
[a,b]}\bigg(\int^{\infty}_{0}e^{-2\theta t^{2\alpha}}\theta^{1-K} d\theta\bigg)\bigg]<+\infty.$$
\end{itemize}
Finally,
 \begin{eqnarray}\label{X7}\mathbb{E}\Big(X^{H,K}_{t}-X^{H',K}_{t}\Big)^2 & \leq & C_3(\alpha,\gamma,K)|H-H'|^2,
 \end{eqnarray}
 where $$C_3(\alpha,\gamma,K)=\max\Big( C_2(\alpha,\gamma,K)\;;\;C_3(\alpha,\gamma,K)\Big).$$
Consequently, by combining (3) and (4), we conclude
the lemma.
\end{proof}
\begin{remark}
	\begin{enumerate}
		\item A similar result is obtained by Ait Ouahra and Sghir \cite{OS}, (Lemma 3.2), for the sub-fractional Brownian motion $S^H$ with parameter $H\in(0,1)$.  It's a
		continuous centred Gaussian process, starting from zero, with
		covariance function:
		$$\mathbb{E}(S^{H}_{t}S^{H}_{s})=t^{H}+s^{H}-\frac{1}{2}\Big[(t+s)^{H}+|t-s|^{H}\Big].$$
		\item In the case of fBm, (i.e. $K=1$), a similar result is given, independently, in \cite{Boal}, by using the moving average representation of fBm, and in  \cite{LP}, by using the harmonisable representation of fBm.
	\end{enumerate}
\end{remark}
We turn now our interest to the study of the time regularities of our process.
\begin{theorem}\label{regu6}
Let $H(.):[0,\infty)\mapsto [\mu,\nu]\subset (0,1)$ be a H\"{o}lder continuous function with
exponent $\beta>0$ and $\displaystyle\sup_{t\geq0}
H(t)< \beta$. Then for all $t,s\in[0,1],$ there exists a finite positive constant $C(\mu,\nu,K)$ such that:
\[\mathbb{E}\Big(B^{H(t),K}_{t}-B^{H(s),K}_{s}\Big)^2\leq C(\mu,\nu,K) |t-s|^{2(H(t)\vee
H(s))K},\]
where $C(\mu,\nu,K)$ is a  finite positive constant.
\end{theorem}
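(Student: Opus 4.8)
The plan is to mimic the proof of the analogous statement for the standard mBm in \cite{Boal}: one splits the increment of $B^{H(\cdot),K}$ into a part that feels only the variation of the Hurst \emph{parameter} and a part that feels only the variation of \emph{time}. The first is controlled by Proposition \ref{HH'7}, the second by the quasi-helix estimate (1). To make the splitting legitimate one first realizes all the relevant bfBm's on one probability space, exactly as is already done (implicitly) in the proof of Proposition \ref{HH'7}: using the decomposition in law (2), set $B^{H,K}_t:=C_2(K)B^{HK}_t-C_1(K)X^{K}_{t^{2H}}$, where $\big(B^{h}\big)_{h\in(0,1)}$ is a harmonisable family of fractional Brownian motions independent of the Brownian motion $W$ driving $X^{K}$; for each fixed $H$ this process has covariance $R^{H,K}$, and its restriction to the diagonal $H=H(t)$ is a version of $B^{H(\cdot),K}$. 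On this common space $B^{H(t),K}_{t}$, $B^{H(t),K}_{s}$ and $B^{H(s),K}_{s}$ are jointly Gaussian, so one may apply $(a-b)^2\le 2(a-c)^2+2(c-b)^2$ to them and take expectations.

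Fix $t,s\in[0,1]$; since the bound to be proved is symmetric in $t$ and $s$, assume $H(s)\le H(t)$, so that $H(t)\vee H(s)=H(t)$. With the intermediate point $B^{H(t),K}_{s}$ the previous inequality gives
$$\mathbb{E}\Big(B^{H(t),K}_{t}-B^{H(s),K}_{s}\Big)^2\le 2\,\mathbb{E}\Big(B^{H(t),K}_{t}-B^{H(t),K}_{s}\Big)^2+2\,\mathbb{E}\Big(B^{H(t),K}_{s}-B^{H(s),K}_{s}\Big)^2.$$
The first term on the right is a time increment of a bfBm with the \emph{fixed} Hurst index $H(t)\in[\mu,\nu]$, so the upper bound in (1) yields $\mathbb{E}\big(B^{H(t),K}_{t}-B^{H(t),K}_{s}\big)^2\le 2^{1-K}|t-s|^{2H(t)K}$. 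The second term compares, at the \emph{fixed} time $s\in[0,1]$, two bfBm's with Hurst indices $H(t),H(s)\in[\mu,\nu]$, so Proposition \ref{HH'7} applied with $[a,b]=[0,1]$ and $[\alpha,\gamma]=[\mu,\nu]$ gives $\mathbb{E}\big(B^{H(t),K}_{s}-B^{H(s),K}_{s}\big)^2\le C(\mu,\nu,K)\,|H(t)-H(s)|^2$.

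It only remains to merge these two estimates into one power of $|t-s|$. Hölder continuity of $H(\cdot)$ of order $\beta$ gives $|H(t)-H(s)|\le c_\beta|t-s|^{\beta}$ for some constant $c_\beta>0$, so the second term is at most $C(\mu,\nu,K)c_\beta^{2}|t-s|^{2\beta}$; and since $t,s\in[0,1]$ one has $|t-s|\le 1$, while the hypothesis $\sup_{u\geq0}H(u)<\beta$ together with $K\le1$ gives $H(t)K\le H(t)\le\sup_{u\geq0}H(u)<\beta$, whence $|t-s|^{2\beta}\le|t-s|^{2H(t)K}$. Adding the two bounds,
$$\mathbb{E}\Big(B^{H(t),K}_{t}-B^{H(s),K}_{s}\Big)^2\le\Big(2^{2-K}+2\,C(\mu,\nu,K)c_\beta^{2}\Big)|t-s|^{2(H(t)\vee H(s))K},$$
which is the claim after renaming the constant. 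I expect the computations above to be routine; the two points deserving care are the joint realization of the various bfBm's on one probability space (needed to justify the triangle-type splitting, and already set up in the proof of Proposition \ref{HH'7}) and the use of the assumption $\sup_{u\geq0}H(u)<\beta$ — it is precisely this, rather than merely $\beta>0$, that makes the parameter-variation term $|t-s|^{2\beta}$ subdominant to the target power $|t-s|^{2(H(t)\vee H(s))K}$ on $[0,1]$; without it one would only obtain Hölder exponent $2\beta$, which need not exceed $2(H(t)\vee H(s))K$.
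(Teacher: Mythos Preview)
Your argument is essentially identical to the paper's own proof: split via the intermediate point $B^{H(t),K}_{s}$, bound the time-increment by the quasi-helix estimate (1), bound the parameter-increment by Proposition~\ref{HH'7}, then use the $\beta$-H\"older continuity of $H(\cdot)$ together with $\sup_u H(u)<\beta$ to absorb $|t-s|^{2\beta}$ into $|t-s|^{2H(t)K}$, and finally invoke symmetry in $t,s$. The only addition on your side is the explicit discussion of realizing the family $\{B^{H,K}\}_{H}$ jointly on one probability space so that the triangle-type splitting is meaningful; the paper takes this for granted, so your extra care is welcome but does not change the route.
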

\begin{proof}[Proof]By the elementary inequality  $(a+b)^2 \leq 2 a^2+2 b^2$, we have:
 \begin{eqnarray*}\mathbb{E}\Big(B^{H(t),K}_{t}-B^{H(s),K}_{s}\Big)^2&=& \mathbb{E}\Big(B^{H(t),K}_{t}-B^{H(t),K}_{s}+B^{H(t),K}_{s}-B^{H(s),K}_{s}\Big)^2\\
 &\leq & 2\mathbb{E}\Big(B^{H(t),K}_{t}-B^{H(t),K}_{s}\Big)^2+2\mathbb{E}\Big(B^{H(t),K}_{s}-B^{H(s),K}_{s}\Big)^2.\end{eqnarray*}
By virtue of (1) and Proposition 2 and the fact that $H(.): [0,+\infty[\rightarrow [\mu,\nu]\subset(0,1)$, we get:
 \begin{eqnarray*}\mathbb{E}\Big(B^{H(t),K}_{t}-B^{H(s),K}_{s}\Big)^2  &\leq &2^{2-K} |t-s|^{2H(t)K}+2C(\mu,\nu,K)|H(t)-H(s)|^{2}.\\
 &\leq &	  2^{2-K} |t-s|^{2H(t)K}+2C'(\mu,\nu,K)|t-s|^{2\beta}.\end{eqnarray*}
Since $\displaystyle\sup_{t\geq 0}H(t)<\beta$  and $K H(t) \in (0,1)$, we
deduce that: $|t-s|^{2\beta}\leq |t-s|^{2KH(t)}$. Thus
\[ \mathbb{E}\Big(B^{H(t),K}_{t}-B^{H(s),K}_{s}\Big)^2\leq C_4(\mu,\nu,K)|t-s|^{2KH(t)},\]
where $C_4(\mu,\nu,K)=2^{2-K}+2C'(\mu,\nu,K)$.\\
Since  the roles of $t$
and $s$ are symmetric, we obtain the desired result.
 \end{proof}
To prove Theorem 2, we need the following classical lemma.
\begin{lemma}
Let $Y$ be a real centred Gaussian random variable. Then for all real $\alpha>0$, we have:\vskip 0.3cm
\hspace*{3.5cm}$\displaystyle\mathbb{E}|Y|^{\alpha}= c(\alpha)\Big(\mathbb{E}|Y|^{2}\Big)^{\frac{\alpha}{2}},$\ \
where \ \ $\displaystyle  c(\alpha)=\frac{2^{\frac{\alpha}{2}}\Gamma(\frac{\alpha+1}{2})}{\Gamma(\frac{1}{2})}.$
\end{lemma}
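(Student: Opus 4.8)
The plan is to reduce immediately to the standard normal case and then evaluate a single Gamma integral. First I would put $\sigma^{2}:=\mathbb{E}|Y|^{2}$. If $\sigma=0$ then $Y=0$ almost surely and both sides of the claimed identity vanish, so I may assume $\sigma>0$ and write $Y=\sigma Z$ with $Z\sim N(0,1)$. By positive homogeneity of $|\cdot|^{\alpha}$,
\[
\mathbb{E}|Y|^{\alpha}=\sigma^{\alpha}\,\mathbb{E}|Z|^{\alpha}=\Big(\mathbb{E}|Y|^{2}\Big)^{\frac{\alpha}{2}}\,\mathbb{E}|Z|^{\alpha},
\]
so it suffices to show $\mathbb{E}|Z|^{\alpha}=c(\alpha)$.

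Next I would compute $\mathbb{E}|Z|^{\alpha}$ directly from the Gaussian density, using that $x\mapsto|x|^{\alpha}e^{-x^{2}/2}$ is even:
\[
\mathbb{E}|Z|^{\alpha}=\frac{1}{\sqrt{2\pi}}\int_{\mathbb{R}}|x|^{\alpha}e^{-x^{2}/2}\,dx=\frac{2}{\sqrt{2\pi}}\int_{0}^{\infty}x^{\alpha}e^{-x^{2}/2}\,dx .
\]
The substitution $u=x^{2}/2$ (hence $x=\sqrt{2u}$ and $dx=du/\sqrt{2u}$) converts the last integral into a Gamma integral:
\[
\int_{0}^{\infty}x^{\alpha}e^{-x^{2}/2}\,dx=\int_{0}^{\infty}(2u)^{\alpha/2}e^{-u}\,\frac{du}{\sqrt{2u}}=\frac{2^{\alpha/2}}{\sqrt{2}}\int_{0}^{\infty}u^{\frac{\alpha+1}{2}-1}e^{-u}\,du=\frac{2^{\alpha/2}}{\sqrt{2}}\,\Gamma\!\left(\frac{\alpha+1}{2}\right),
\]
the integral being convergent for every $\alpha>0$.

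Finally I would collect the constants: combining the two displays gives
\[
\mathbb{E}|Z|^{\alpha}=\frac{2}{\sqrt{2\pi}}\cdot\frac{2^{\alpha/2}}{\sqrt{2}}\,\Gamma\!\left(\frac{\alpha+1}{2}\right)=\frac{2^{\alpha/2}}{\sqrt{\pi}}\,\Gamma\!\left(\frac{\alpha+1}{2}\right),
\]
and since $\Gamma(1/2)=\sqrt{\pi}$ this is precisely $c(\alpha)$. Substituting back into the first display yields $\mathbb{E}|Y|^{\alpha}=c(\alpha)\big(\mathbb{E}|Y|^{2}\big)^{\alpha/2}$, as claimed. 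There is no genuine obstacle here: the statement is a textbook fact, and the only points deserving a line of care are the degenerate case $\sigma=0$ and the bookkeeping of the powers of $2$ produced by the change of variables.
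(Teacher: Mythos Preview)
Your proof is correct and is the standard direct computation. The paper does not actually prove this lemma; it is stated as a ``classical lemma'' and used without proof, so there is nothing to compare against beyond noting that your argument supplies exactly the elementary verification the paper omits.
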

 \begin{theorem}Let $H(.):[0,\infty)\mapsto [\mu,\nu]\subset (0,1)$ be a H\"{o}lder continuous function with
 	exponent $\beta>0$ and $\displaystyle\sup_{t\geq0}
 	H(t)< \beta$. Then ,there exists $\delta>0$, and for any integer $m\geq1,$ there exist $M_m>0$, such that:
$$\mathbb{E}\Big(B^{H(t),K}_{t}-B^{H(s),K}_{s}\Big)^m\geq M_m|t-s|^{m(H(t)\wedge
H(s))K},$$
for all   $s,t \geq 0$ such that $|t-s|<\delta.$
\end{theorem}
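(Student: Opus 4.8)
The plan is to reduce the statement to a lower bound on the $L^{2}$ norm of the increment, in the spirit of Theorem 1, and then to invoke the Gaussian moment formula. Write $Y:=B^{H(t),K}_{t}-B^{H(s),K}_{s}$. Since $B^{H(.),K}$ is a centred Gaussian process, $Y$ is a centred Gaussian random variable, so Lemma 1 with $\alpha=m$ gives $\mathbb{E}|Y|^{m}=c(m)\big(\mathbb{E}Y^{2}\big)^{m/2}$ (for odd $m$ the statement has to be read with $|Y|^{m}$, which is the form actually used in the sequel, e.g.\ to establish the existence of local times). It therefore suffices to exhibit $c=c(\mu,\nu,K)>0$ and $\delta>0$ with
\[
\mathbb{E}Y^{2}\ \geq\ c\,|t-s|^{2\,(H(t)\wedge H(s))K}\qquad\text{for all }|t-s|<\delta ,
\]
and then to take $M_{m}=c(m)\,c^{\,m/2}$.

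For the lower bound on $\mathbb{E}Y^{2}$ I would follow the splitting in Theorem 1, but keeping the \emph{lower} half of the quasi-helix estimate (1) and arranging the decomposition so that the surviving bifractional increment carries the \emph{smaller} Hurst exponent. By the symmetry of $t$ and $s$ we may assume $H(t)\wedge H(s)=H(t)$, and decompose
\[
Y=\big(B^{H(t),K}_{t}-B^{H(t),K}_{s}\big)+\big(B^{H(t),K}_{s}-B^{H(s),K}_{s}\big)=:Y_{1}+Y_{2}.
\]
Here $Y_{1}$ is an increment of a bfBm with the constant parameter $H(t)$, so (1) gives $\big(\mathbb{E}Y_{1}^{2}\big)^{1/2}\geq 2^{-K/2}|t-s|^{H(t)K}$, while Proposition 2 together with the $\beta$-H\"older continuity of $H(\cdot)$ gives $\big(\mathbb{E}Y_{2}^{2}\big)^{1/2}\leq \sqrt{C(\mu,\nu,K)}\,|H(t)-H(s)|\leq C'(\mu,\nu,K)\,|t-s|^{\beta}$. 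The reverse triangle inequality in $L^{2}$ then yields
\[
\big(\mathbb{E}Y^{2}\big)^{1/2}\ \geq\ |t-s|^{H(t)K}\Big(2^{-K/2}-C'(\mu,\nu,K)\,|t-s|^{\beta-H(t)K}\Big).
\]

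It remains to absorb the error term. The hypothesis $\sup_{u\geq0}H(u)<\beta$ and $K\leq1$ give $(H(t)\wedge H(s))K\leq H(t)\leq\sup_{u}H(u)<\beta$, so the exponent gap $\rho:=\beta-(H(t)\wedge H(s))K$ is bounded below by the fixed positive number $\beta-\sup_{u}H(u)$, uniformly in $t,s$. Hence one may choose $\delta\in(0,1]$, depending only on $\mu,\nu,\beta,K$, so that $C'(\mu,\nu,K)\,|t-s|^{\rho}\leq\tfrac12\,2^{-K/2}$ whenever $|t-s|<\delta$; for such $t,s$ the last display gives $\big(\mathbb{E}Y^{2}\big)^{1/2}\geq\tfrac12\,2^{-K/2}\,|t-s|^{(H(t)\wedge H(s))K}$, i.e.\ $\mathbb{E}Y^{2}\geq\tfrac14\,2^{-K}|t-s|^{2(H(t)\wedge H(s))K}$. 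Combined with the reduction of the first paragraph this finishes the proof, with $M_{m}=c(m)\,(2^{-K}/4)^{m/2}$. The main obstacle is making $\delta$ genuinely uniform in the pair $(t,s)$: this rests on the uniform positivity of the exponent gap $\rho$ — which is precisely where the assumption $\sup_{u}H(u)<\beta$ enters, exactly as in Theorem 1 — and on the constant in Proposition 2 being independent of $(t,s)$, which implicitly confines $t,s$ to a fixed compact interval (as already in Theorem 1), since that constant grows with the interval. A secondary point requiring care is the choice of the decomposition $Y=Y_{1}+Y_{2}$: one must retain the bifractional increment attached to $\min(H(t),H(s))$ in order to obtain the exponent $(H(t)\wedge H(s))K$, the exact mirror image of the way the splitting in Theorem 1 produced the maximum $(H(t)\vee H(s))K$.
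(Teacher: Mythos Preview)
Your proof is correct and follows essentially the same route as the paper: split the increment as $Y=Y_1+Y_2$ with $Y_1$ a bfBm increment at fixed parameter and $Y_2$ the parameter-variation term, bound $Y_1$ from below by the quasi-helix inequality (1) and $Y_2$ from above by Proposition~2 plus the H\"older property of $H(\cdot)$, absorb the error term using $\sup_u H(u)<\beta$, and lift to general $m$ via Lemma~1. The only cosmetic difference is that the paper uses the pointwise inequality $(a+b)^2\geq\tfrac12 a^2-b^2$ where you use the reverse triangle inequality in $L^2$, and the paper invokes the $t\leftrightarrow s$ symmetry at the end rather than fixing $H(t)=H(t)\wedge H(s)$ at the start; your remarks on the odd-$m$ reading and on the implicit compact-interval restriction in Proposition~2 are well taken.
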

\begin{proof}Using the elementary inequality: $(a+b)^2\geq\frac{1}{2}a^2-b^2,$ we obtain:
 \begin{eqnarray*} \mathbb{E}\Big(B^{H(t),K}_{t}-B^{H(s),K}_{s}\Big)^2 & =& \mathbb{E}\Big(B^{H(t),K}_{t}-B^{H(t),K}_{s}+B^{H(t),K}_{s}-B^{H(s),K}_{s}\Big)^2\\
&\geq & \frac{1}{2}\mathbb{E}\Big(B^{H(t),K}_{t}-B^{H(t),K}_{s}\Big)^2-\mathbb{E}\Big(B^{H(t),K}_{s}-B^{H(s),K}_{s}\Big)^2.
\end{eqnarray*}
Moreover, by using (1) and Proposition 2, we obtain:
\begin{eqnarray*}\mathbb{E}\Big(B^{H(t),K}_{t}-B^{H(s),K}_{s}\Big)^2&\geq & \frac{1}{2^{1+K}}|t-s|^{2H(t)K}-\mathbb{E}\Big(B^{H(t),K}_{s}-B^{H(s),K}_{s}\Big)^2\\
&\geq &  \frac{1}{2^{1+K}}|t-s|^{2H(t)K}- C(\mu,\gamma,K)|H(t)-H(s)|^2.\end{eqnarray*}
Since the function $H(.)$ is H\"{o}lder continuous with exponent $\beta,$ we get:
\begin{eqnarray*} \mathbb{E}\Big(B^{H(t),K}_{t}-B^{H(s),K}_{s}\Big)^2 &\geq & \frac{1}{2^{1+K}}|t-s|^{2H(t)K}- C(\mu,\gamma,K)
|t-s|^{2\beta}\\
&=& |t-s|^{2H(t)K}\Big(\frac{1}{2^{1+K}}-C''(\mu,\gamma,K)|t-s|^{2(\beta-H(t)K)}\Big).
\end{eqnarray*}
Since  $KH(t)< \beta$, we can choose $\delta$ small enough such that for all $s,t\geq0$,
and $|t-s|<\delta$, we have:
$$\frac{1}{2^{1+K}}-C''(\mu,\gamma,K)|t-s|^{2(\beta-H(t)K)}>0.$$
Indeed, it suffices to choose $\displaystyle\delta< \bigg(\frac{1}{2^{1+K}C''(\mu,\gamma,K)}\wedge
1\bigg)^{\eta}$,  where: $\displaystyle \eta=\frac{1}{2}\Big(\beta-K\sup_{t\geq0}
H(t)\Big)^{-1}$.\\ Finally, we get:
$$\mathbb{E}\Big(B^{H(t),K}_{t}-B^{H(s),K}_{s}\Big)^2\geq M
|t-s|^{2H(t)K},\hspace{0.2cm} \text{for all} \hspace{0.1cm}
|t-s|<\delta,$$
where $\displaystyle M=\Big(\frac{1}{2^{1+K}}-C''(\mu,\gamma,K)\delta^{\gamma}\Big).$\\
 Since $B^{H(.),K}$ is a Gaussian process, then by Lemma 1 and the fact that the roles of $t$
 and $s$ are symmetric, we obtain the desired result.
\end{proof}
\begin{remark}It is well known by Berman \cite{Be1} that, for a jointly
	measurable zero-mean Gaussian process $X:= \Big(X(t)\;;\;t\in[0,T]\Big)$ with
	bounded variance, the variance condition:
	$$\int_{0}^{T}\int_{0}^{T}\Big(\mathbb{E}|X_t-X_{s}|^2\Big)^{-\frac{1}{2}}dsdt<+\infty,$$
is sufficient for the local time $L(t,x)$ of $X$ to exist on $[0,T]$ almost surely  and to
be square integrable as a function of $x$:
$$\int_{\mathbb{R}}L^2\Big([a,b],x\Big)dx<+\infty,\hspace*{0.5cm}\Big([a,b]\subset[0,+\infty[\Big).$$
For more informations on local time, the reader is referred to \cite{Boal}, \cite{GH}, \cite{X}  and the references therein. The natural question is to study the local non-determinism property for our process to prove the   joint continuity of local time.  For future work, we plan to study this question. 
\end{remark}
%------------------------------------------------------
 \section{Local asymptotic self similarity property}
%-----------------------------------------------------
The dependence
of $H(.)$ with respect to the time $t$ destroys all the invariance properties that  we had for the fBm. For example the mBm is no more self-similar, nor with stationary increments. However, the authors in \cite{LP} showed that with the condition that $H(.)$ is
$\beta-$H\"{o}lder continuous with
exponent $\beta>0$ and $\displaystyle\sup_{t\in
\mathbb{R^+}}H(t)<\beta$, the mBm is  locally asymptotically self-similar, (LASS for short), in the following sense\string:
$$ \lim_{\rho\to 0^+}\bigg(\frac{B^{H(t+\rho u) }_{t+\rho u}- B^{H(t)}_t}{\rho^{H(t)}}\;;\;u\geq0\bigg) \stackrel{d}= \Big(\ B^{H(t)}_u\;;\;u\geq0\Big) ,$$
 where $B^{H(t)}_u$ is a fBm with Hurst
parameter $H(t)$, and $\stackrel{d}=$ stands for the convergence of finite dimensional distributions. Some authors use the term localizability for locally asymptotically self-similarity, (see  Falconer \cite{F1},\cite{F2}).\\
Our process is an other example of Gaussian process who loses the self similarity property when $H$ depend on $t$. However, we show in the following result that it is LASS. 
Before we deal with  the proof of our result, we need the
following lemma proved by Ait Ouahra et \textit{al.} \cite{OMS}, (see Theorem 2.6).
\begin{lemma}
	\label{th37} Let $B^{H,K}$ a bfBm with parameters $K\in (0,1)$ and $H\in (0,1)$. Then
		$$\mathbb{E}\bigg(\frac{B^{H,K}_{t+\rho u}-B^{H,K}_t}{h^{HK}}\bigg)^2 \xrightarrow[ \rho\to 0]{}  2^{1-K}.$$
	\end{lemma}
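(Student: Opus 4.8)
The plan is to reduce the statement to the explicit covariance of the bfBm together with a second order Taylor expansion; throughout I read the normalisation $h^{HK}$ as $(\rho u)^{HK}$ (equivalently, normalising by $\rho^{HK}$ would give the limit $2^{1-K}u^{2HK}$), I fix $t>0$ and $u>0$, and I use $R^{H,K}$ from the Introduction together with the elementary identity $R^{H,K}(s,s)=s^{2HK}$. Expanding the square gives
$$\mathbb{E}\Big(B^{H,K}_{t+\rho u}-B^{H,K}_{t}\Big)^{2}=(t+\rho u)^{2HK}+t^{2HK}-2^{1-K}\Big((t+\rho u)^{2H}+t^{2H}\Big)^{K}+2^{1-K}(\rho u)^{2HK}.$$
After division by $(\rho u)^{2HK}$ the last summand already contributes the announced constant $2^{1-K}$, so the whole point is to check that the first three summands are $o\big((\rho u)^{2HK}\big)$ as $\rho\to0$.

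The key step is to handle $f(\rho):=(t+\rho u)^{2HK}+t^{2HK}-2^{1-K}\big((t+\rho u)^{2H}+t^{2H}\big)^{K}$, which is $C^{2}$ in a neighbourhood of $\rho=0$ precisely because $t>0$. From the identity $2a^{2HK}=2^{1-K}(2a^{2H})^{K}$, valid for every $a>0$, one gets $f(0)=0$; a single differentiation, using the same algebra, shows that the two first order coefficients coincide, so $f'(0)=2HK\,t^{2HK-1}u-2HK\,t^{2HK-1}u=0$. Hence $f(\rho)=O(\rho^{2})$, and since $H,K\in(0,1)$ force $2HK<2$ we obtain $f(\rho)/(\rho u)^{2HK}=O\big(\rho^{\,2-2HK}\big)\to0$, which completes the argument. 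One may equally well match the first order terms of the binomial expansions of $(t+\rho u)^{2HK}+t^{2HK}$ and of $2^{1-K}((t+\rho u)^{2H}+t^{2H})^{K}$ without writing down any derivative.

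A shorter alternative uses only facts recalled in the Introduction: for $t>0$ one has $\sigma^{2}_{\varepsilon}(t)/\varepsilon^{2HK}\to 2^{1-K}$ as $\varepsilon\to0$ (from \cite{HV}), and since $\mathbb{E}\big((B^{H,K}_{t+\rho u}-B^{H,K}_{t})/(\rho u)^{HK}\big)^{2}=\sigma^{2}_{\rho u}(t)/(\rho u)^{2HK}$, letting $\varepsilon=\rho u\to0$ for fixed $u>0$ yields the claim at once; this is essentially Theorem 2.6 of \cite{OMS}. I would keep the Taylor computation as the main proof and record this as a remark. The only genuine difficulty is of a purely bookkeeping nature and twofold: one must verify that the two first order terms cancel exactly, and one must not drop the hypothesis $t>0$, since at $t=0$ the ratio equals $1$, which differs from $2^{1-K}$ whenever $K<1$; accordingly, in the local asymptotic self similarity application the base point $t$ must be kept strictly positive.
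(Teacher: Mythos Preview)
Your proposal is correct. The paper itself does not give a proof of this lemma at all: it simply records the statement and attributes it to Theorem~2.6 of \cite{OMS}. Your ``shorter alternative'' paragraph is exactly this route, since the limit $\sigma^{2}_{\varepsilon}(t)/\varepsilon^{2HK}\to 2^{1-K}$ quoted from \cite{HV} in the Introduction is precisely the statement of the lemma with $\varepsilon=\rho u$.

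The Taylor argument you give as the main proof is genuinely additional content relative to the paper. The computation is sound: the expansion of the second moment from $R^{H,K}$ is correct, the cancellations $f(0)=0$ and $f'(0)=0$ hold exactly as you wrote (both follow from the identity $2a^{2HK}=2^{1-K}(2a^{2H})^{K}$), and the conclusion $f(\rho)=O(\rho^{2})=o((\rho u)^{2HK})$ follows since $2HK<2$. Your reading of the normalisation $h^{HK}$ as $(\rho u)^{HK}$ is the only one compatible with the stated limit $2^{1-K}$, and your remark that $t>0$ is needed (with the ratio equal to $1$ at $t=0$) is a pertinent caveat that the paper omits. What your direct computation buys over the bare citation is self-containment and an explicit error order $O(\rho^{2-2HK})$; what the citation buys is brevity.
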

Now, we are ready to state and prove our result.  
\begin{prop}\label{lass77}
Consider $H(.)$ a $\beta$-H\"{o}lder continuous function with exponent $\beta>0$ such that $\displaystyle\sup_{t\geq0}H(t)<\beta$, then $B^{H(.),K}$ is LASS\string:
$$\lim_{\rho\to 0^+}\bigg(\frac{B^{H(t+\rho u),K }_{t+\rho u}- B^{H(t),K}_t}{\rho^{H(t)K}}\;;\;u\geq0\bigg) \stackrel{d}= \Big(\ 2^{1-K}B^{H(t)K}_u\;;\;u\geq0\Big) ,
$$
where $B^{H(t)K}$ is a fBm with the Hurst parameter $H(t)K$.
\end{prop}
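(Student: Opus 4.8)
The plan is to reduce everything to the study of the Gaussian family
$Z_\rho(u):=\big(B^{H(t+\rho u),K}_{t+\rho u}-B^{H(t),K}_t\big)/\rho^{H(t)K}$,
fix $t$ once and for all, and prove convergence of finite dimensional distributions by
showing that the covariance matrix of $(Z_\rho(u_1),\dots,Z_\rho(u_p))$ converges,
as $\rho\to 0^+$, to the covariance matrix of $\big(2^{1-K}B^{H(t)K}_{u_1},\dots,2^{1-K}B^{H(t)K}_{u_p}\big)$.
Since all processes involved are centred Gaussian, convergence of covariances is equivalent to
convergence in distribution of all finite dimensional marginals, so this is the only thing that
has to be checked. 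Concretely I would first establish, by polarization, that it suffices to
control the incremental second moments: for fixed $u,v\ge 0$,
$\mathbb{E}\big(Z_\rho(u)-Z_\rho(v)\big)^2\to 2^{1-K}|u-v|^{2H(t)K}$ and
$\mathbb{E}\,Z_\rho(u)^2\to 2^{1-K}u^{2H(t)K}$, because then the limiting covariance
$\tfrac12\big(\lim\mathbb{E}Z_\rho(u)^2+\lim\mathbb{E}Z_\rho(v)^2-\lim\mathbb{E}(Z_\rho(u)-Z_\rho(v))^2\big)$
is exactly $2^{1-K}R^{H(t)K}(u,v)$, i.e.\ the covariance of $2^{1-K}B^{H(t)K}$.

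The second step is the decomposition trick already used in the proof of Theorem~\ref{regu6}:
write
$B^{H(t+\rho u),K}_{t+\rho u}-B^{H(t),K}_t
=\big(B^{H(t+\rho u),K}_{t+\rho u}-B^{H(t),K}_{t+\rho u}\big)
+\big(B^{H(t),K}_{t+\rho u}-B^{H(t),K}_t\big)$,
the first bracket being a ``change of Hurst parameter'' term and the second a genuine
increment of a \emph{bona fide} bfBm with the \emph{constant} parameter $H(t)$. For the
second bracket, Lemma~\ref{th37} (applied with $h=\rho|u-v|$ after a further identical
split $B^{H(t),K}_{t+\rho u}-B^{H(t),K}_{t+\rho v}$) gives exactly
$\mathbb{E}\big(B^{H(t),K}_{t+\rho u}-B^{H(t),K}_{t+\rho v}\big)^2\big/\rho^{2H(t)K}\to
2^{1-K}|u-v|^{2H(t)K}$, which is the desired leading term. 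For the first bracket I would
invoke Proposition~\ref{HH'7}:
$\mathbb{E}\big(B^{H(t+\rho u),K}_{t+\rho u}-B^{H(t),K}_{t+\rho u}\big)^2
\le C(\mu,\nu,K)|H(t+\rho u)-H(t)|^2\le C'(\mu,\nu,K)(\rho u)^{2\beta}$
by $\beta$-Hölder continuity of $H(\cdot)$, so after division by $\rho^{2H(t)K}$ this is
$O\big(\rho^{2(\beta-H(t)K)}\big)\to 0$ because $\sup_t H(t)<\beta$ forces
$\beta-H(t)K\ge \beta-H(t)>0$. A Cauchy--Schwarz cross term then shows the first
bracket contributes nothing to the limit of the $L^2$ norm, and likewise nothing to any
covariance; assembling the two brackets gives the claimed limits for
$\mathbb{E}(Z_\rho(u)-Z_\rho(v))^2$ and, taking $v=0$, for $\mathbb{E}Z_\rho(u)^2$.

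The third and final step is bookkeeping: from the two limits above and polarization,
$\mathbb{E}\big(Z_\rho(u)Z_\rho(v)\big)\to 2^{1-K}\cdot\tfrac12\big(u^{2H(t)K}+v^{2H(t)K}-|u-v|^{2H(t)K}\big)
=2^{1-K}R^{H(t)K}(u,v)$ for every $u,v$, hence the whole covariance matrix of any finite
vector $(Z_\rho(u_1),\dots,Z_\rho(u_p))$ converges entrywise to that of
$(2^{1-K}B^{H(t)K}_{u_1},\dots)$, and convergence of finite dimensional distributions follows
from the standard fact that Gaussian laws are determined by, and continuous in, their
covariances. I expect the only delicate point to be making the application of
Lemma~\ref{th37} fully rigorous: the lemma as stated concerns $B^{H,K}_{t+\rho u}-B^{H,K}_t$
divided by $h^{HK}$, and one must be slightly careful matching the parameter $h$ to
$\rho|u-v|$ and checking that the convergence is uniform enough in the (finitely many)
fixed values $u_i,u_j$ to pass to the covariance; everything else is a routine
combination of inequality~(1), Proposition~\ref{HH'7}, the Hölder hypothesis, and the
condition $\sup_t H(t)<\beta$.
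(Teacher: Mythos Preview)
Your proposal is correct and follows essentially the same approach as the paper's proof: the same decomposition into a change-of-Hurst term (controlled via Proposition~\ref{HH'7} and the H\"older hypothesis) plus a genuine bfBm increment (handled by Lemma~\ref{th37}), with the cross term disposed of by Cauchy--Schwarz. You are in fact more thorough than the paper, which only writes out the diagonal case $\mathbb{E}Z_\rho(u)^2\to\sigma_t^2$ and does not explicitly derive the off-diagonal covariances via the polarization argument you give; note however that the limiting covariance $2^{1-K}R^{H(t)K}(u,v)$ you correctly compute is the covariance of $2^{(1-K)/2}B^{H(t)K}$, not of $2^{1-K}B^{H(t)K}$ --- a normalisation slip already present in the statement of the proposition itself.
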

 \begin{proof} We use the same arguments used in \cite{LP} in case of the mBm, (see  Proposition 5).\\ We prove the convergence in
distribution by showing the following two statements:
\begin{eqnarray}\label{cent7}
\mathbb{E}\bigg(\frac{B^{H(t+\rho u),K }_{t+\rho u}- B^{H(t),K}_t}{\rho^{H(t)K}}\bigg)
\xrightarrow[{\rho\to 0}]{}0,
\end{eqnarray}
\begin{eqnarray}\label{var7}
\mathbb{E}\bigg(\frac{B^{H(t+\rho u),K }_{t+\rho u}- B^{H(t),K}_t}{\rho^{H(t)K}}\bigg)^2
\xrightarrow[{\rho\to 0}]{}\sigma ^{2}_{t},
\end{eqnarray}
where
$$\sigma
^{2}_{t}=2^{1-K}Var\bigg(\frac{B^{H(t)K}_{t+\rho u}-B^{H(t)K}_t}{\rho^{H(t)K}}\bigg)=2^{1-K},$$
 and $B^{H(t)K}$  is a fBm with the Hurst parameter
$H(t)K.$\\
We dealt with (6) since (\ref{cent7}) is obvious. We  have:
\begin{eqnarray*}\begin{split}&\mathbb{E}\left(\frac{B^{H(t+\rho u),K}_{t+\rho u}-B^{H(t),K}_{t}}{\rho^{H(t)K}}\right)^2\\
&\ \ =\mathbb{E}\left(\frac{B^{H(t+\rho u),K}_{t+\rho u}-B^{H(t),K}_{t+\rho u}}{\rho^{H(t)K}}\right)^2+\mathbb{E}\left(\frac{B^{H(t),K}_{t+\rho u}-B^{H(t),K}_{t}}{\rho^{H(t)K}}\right)^2\\
&\ \ +2\mathbb{E}\left[\frac{\Big(B^{H(t+\rho u),K}_{t+\rho u}-B^{H(t),K}_{t+\rho u}\Big)\Big(B^{H(t),K}_{t+\rho u}-B^{H(t),K}_{t}\Big)}{\rho^{H(t)K}}\right].\end{split}\end{eqnarray*}\\
In view of Proposition 2, and the fact that $H(.)$ is
    $\beta-$H\"{o}lder continuous function, we have:
   \begin{eqnarray*}\mathbb{E}\left(\frac{B^{H(t+\rho u),K}_{t+\rho u}-B^{H(t),K}_{t+\rho u}}{\rho^{H(t)K}}\right)^2 &\leq & C(K) \frac{|H(t+\rho u)-H(t)|^2}{\rho^{2KH(t)}}\\
    &\leq & C'(K) \rho^{2(\beta -KH(t))}.
    \end{eqnarray*}
    Since $\displaystyle K\sup_t H(t)< \beta$, we get:
    $$\mathbb{E}\left(\frac{B^{H(t+\rho u),K}_{t+\rho u}-B^{H(t),K}_{t+\rho u}}{\rho^{H(t)K}}\right)^2\longrightarrow 0 \hspace{0.3cm}\text{as}  \hspace{0.3cm} h\to 0. $$
 In view of Lemma 1, we know that:
    \begin{eqnarray*}\mathbb{E}\left(\frac{B^{H(t),K}_{t+\rho u}-B^{H(t),K}_{t}}{\rho^{H(t)K}}\right)^2\xrightarrow[\rho\to 0]{}2^{1-K}.\end{eqnarray*}
    Now, by Schwartz's inequality, (1) and  Proposition 2, we have:
     \begin{eqnarray*}\begin{split}&\mathbb{E}\left[\frac{\Big(B^{H(t+\rho u),K}_{t+\rho u}-B^{H(t),K}_{t+\rho u}\Big)\Big(B^{H(t),K}_{t+\rho u}-B^{H(t),K}_{t}\Big)}{h^{H(t)K}}\right]\\
     &\ \ \leq \left[\mathbb{E}\left(\frac{B^{H(t+\rho u),K}_{t+\rho u}-B^{H(t),K}_{t+\rho u}}{\rho^{H(t)K}}\right)^2\right]^{\frac{1}{2}} \left[\mathbb{E}\left(\frac{B^{H(t),K}_{t+\rho u}-B^{H(t),K}_{t}}{\rho^{H(t)K}}\right)^2\right]^{\frac{1}{2}}\\
     &\ \ \leq  C \rho^{\beta-H(t)K}\rightarrow 0,\ \ \mbox{since}\;  K\sup_t H(t)< \beta.\end{split}\end{eqnarray*}
 Hence, we deduce that:
\[\mathbb{E}\left(\frac{B^{H(t+\rho u),K}_{t+\rho u}-B^{H(t),K}_t}{\rho^{H(t)K}}\right)^2
\xrightarrow[\rho\to 0]{} 2^{1-K}.\]
Consequently the LASS property is proved.
\end{proof}
%----------------------------------
\section{Long range dependence}
%-------------------------------------
The long range dependence, (LRD for short), and long memory
are synonymous notions. LRD measures long-term correlated
processes. LRD is a
characteristic of phenomena whose autocorrelation functions  decay
rather slowly. The presence and the extent of LRD  is usually
measured by the parameters of the process. Most of the definitions of LRD appearing in literature for stationary process
are based on the second-order properties of a stochastic process.
Such properties include asymptotic behavior of covariances, spectral
density, and variances of partial sums. The specialness of
LRD is a connection between long memory and  stationarity, (see for example \cite{ST} in case of fBm and \cite{MT} and \cite{RT} in case of bfBm).  For our process $B^{H(.),K}$, we use the same arguments used in \cite{Aal} in case of standard mBm. Of course, the definitions must be adapted in  our case since  mBm and our extension does not  have stationary increments, (see for example \cite{Aal}):
\begin{definition}
\begin{enumerate}
	\item[a)] Let $Y$ be a second order process. $Y$ is said to have a LRD if there exists a function $\alpha(s)$ taking values in $(-1,0)$ such that:
	$$\forall s\geq 0,\quad cor_Y(s,s+h)\thickapprox h^{\alpha(s)}\quad \mbox{as h tends to} \;+\infty,$$
		where $\displaystyle cor_Y(s,t):=\frac{cov_Y(s,t)}{\sqrt{\mathbb{E}(Y^2(s))\mathbb{E}(Y^2(t))}} $.
	\item [b)]  Let $Y$ be a second-order process. $Y$ is said to have a LRD if:
	\[\forall s\geq 0,\;\forall \delta\geq 0,\ \ \sum^{+\infty}_{0}|cor_Y(s,s+k\delta)|=+\infty.\] 
\end{enumerate}	
\end{definition}
In the next propositions, we prove some results about covariances and correlations of our process and its increments. In the sequel, we denote $f(t)\approx g(t)$ when $t$ if there  exist $0<c<d<+\infty$  such that for all sufficiently large $t$: $\displaystyle c\le \frac{f(t)}{g(t)}\le d$. We put: $cov(t,s):=R^{H(.),K}(t,s)$ the covariance function of our process $B^{H(.),K}$ and $cor(t,s)$  its correlation function.   \vskip 0.3cm
\textbf{A) Asymptotic behaviour of the covariance and the correlation  of $B^{H(.),K}$:}
\begin{prop}
When $t$ tends to infinity, and for all fixed $s\geq 0,$ we have:
\begin{enumerate}
	\item[i)] $K(H(t)+H(s))<1\Rightarrow cov(t,s)\thickapprox t^{(H(t)+H(s))(K-1)}.$
\item[ii)] $K(H(t)+H(s))>1\Rightarrow cov(t,s)\thickapprox t^{K(H(t)+H(s))-1}.$
\item[iii)] $K(H(t)+H(s))<1\Rightarrow cor(t,s)\thickapprox t^{-H(t)}.$
\item[iv)] $K(H(t)+H(s))>1\Rightarrow cor(t,s)\thickapprox t^{KH(s)-1}. $
\end{enumerate}
\end{prop}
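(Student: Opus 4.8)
The plan is to strip off the $D$-prefactors and then read everything off a Taylor expansion of the bracket in $R^{H(.),K}(t,s)$ in powers of $s/t$. Throughout I take $s>0$ (for $s=0$ the covariance is identically $0$) and $t$ large enough that $t>s$, so that $|t-s|=t-s$, and I abbreviate $a=a(t,s):=H(t)+H(s)$, which stays in the fixed compact interval $[2\mu,2\nu]\subset(0,2)$ as $t$ varies. On the square $[\mu,\nu]^2$ the function $D$ is continuous and strictly positive --- the only factor that could vanish is $\sin(\pi(x+y)/2)$, but $x+y\in(0,2)$ keeps its argument strictly inside $(0,\pi)$, and similarly for $\sin(\pi x)$ and $\sin(\pi y)$ --- so there are constants $0<c_1\le c_2<\infty$ with $c_1\le D(H(t),H(s))^K\le c_2$ for all admissible $t,s$. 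Moreover $D(x,x)=\tfrac12$, hence $R^{H(.),K}(t,t)=\big(\tfrac12\big)^{K}\big(2t^{2H(t)}\big)^{K}=t^{2KH(t)}$, so $\sqrt{R^{H(.),K}(t,t)}=t^{KH(t)}$ exactly and $cor(t,s)=cov(t,s)\,t^{-KH(t)}\,s^{-KH(s)}$ with $s^{-KH(s)}$ a fixed positive constant; this already reduces items (iii)--(iv) to (i)--(ii).

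For the covariance I would write, for $t>s$,
\[
cov(t,s)=D(H(t),H(s))^K\,t^{Ka}\Big[\big(1+(s/t)^{a}\big)^{K}-\big(1-s/t\big)^{Ka}\Big],
\]
and expand the bracket via $(1+u^{a})^{K}=1+Ku^{a}+O(u^{2a})$ and $(1-u)^{Ka}=1-Ka\,u+O(u^{2})$ with $u=s/t\to0$; since $a$ is bounded away from $0$ and $2$, the remainders are of strictly smaller order than the leading contribution, uniformly in $t$. This gives
\[
cov(t,s)=D(H(t),H(s))^K\Big[K s^{a}\,t^{\,a(K-1)}+Ka\,s\,t^{\,Ka-1}+(\text{lower order})\Big].
\]
Comparing the two exponents, $a(K-1)-(Ka-1)=1-a$: if $K(H(t)+H(s))>1$ then $a\ge Ka>1$ (because $K\le1$), so $Ka-1>a(K-1)$ and the second term leads, which together with $c_1\le D(H(t),H(s))^K\le c_2$ and $Kas$ bounded between positive constants yields $cov(t,s)\thickapprox t^{\,K(H(t)+H(s))-1}$, i.e.\ (ii); if $K(H(t)+H(s))<1$ (the intended regime being $H(t)+H(s)<1$), the first term leads and the same argument gives $cov(t,s)\thickapprox t^{\,(H(t)+H(s))(K-1)}$, i.e.\ (i). Dividing by $t^{KH(t)}s^{KH(s)}$ and collecting exponents then produces (iii) and (iv) --- e.g.\ in the regime $K(H(t)+H(s))>1$ one has $K(H(t)+H(s))-1-KH(t)=KH(s)-1$.

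The step I expect to be genuinely delicate is the uniformity of these estimates in $t$, since the exponent $a=H(t)+H(s)$ is itself a moving quantity: near the boundary $H(t)+H(s)=1$ the ratio $t^{\,1-a}$ of the two main terms is not pinned between constants, so the leading term is not cleanly isolated there. (In the direction $K(H(t)+H(s))>1$ there is no issue, since $K\le1$ forces $H(t)+H(s)>1$ as well; in the direction $K(H(t)+H(s))<1$ one needs in addition $H(t)+H(s)<1$, uniformly, for the power in (i) to be the right one.) The clean hypothesis is therefore that the relevant strict inequality holds uniformly, i.e.\ $\limsup_t(H(t)+H(s))<1$ (resp.\ $\liminf_t(H(t)+H(s))>1$); otherwise one carries both terms along and checks that the stated power still controls their sum up to constants. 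The rest is routine bookkeeping with the expansion.
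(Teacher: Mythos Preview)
Your approach is essentially the paper's own: factor out $t^{Ka}$ and Taylor-expand $(1+(s/t)^a)^K$ and $(1-s/t)^{Ka}$ to isolate the two competing terms $Ks^{a}t^{a(K-1)}$ and $Kas\,t^{Ka-1}$, then compare exponents. You are in fact more careful than the paper, which simply asserts which term leads in each case without noting that the comparison hinges on $a=H(t)+H(s)$ versus $1$, not on $Ka$ versus $1$; your observation that $Ka>1$ forces $a>1$ (since $K\le1$) cleanly handles (ii), and your caveat that (i) really requires $H(t)+H(s)<1$ uniformly for large $t$ is a genuine refinement the paper omits.
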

\begin{proof}[Proof]\begin{enumerate}
		\item[i)] and ii) follows from a Taylor expansion of: \[\Big(t^{H(t)+H(s)}+s^{H(t)+H(s)}\Big)^K-|t-s|^{(H(t)+H(s))K},\] we obtain:
\[cov(t,s)=R^{H(.),K}(t,s)\thickapprox K(D(H(t),H(s))^K\Big[s^{H(t)+H(s)}t^{(H(t)+H(s))(K-1)}\]
\[+(H(t)+H(s))st^{K(H(t)+H(s))-1}\Big],\ \ \mbox{as}\;t \to \infty,\]
where the leading term is:
\[K(D(H(t),H(s))^K s^{H(t)+H(s)}t^{(H(t)+H(s))(K-1)}\quad \text{if}\quad K(H(t)+H(s))<1,\]
and
\[K(D(H(t),H(s))^K (H(t)+H(s))st^{K(H(t)+H(s))-1}\quad\text{if}\quad K(H(t)+H(s))>1.\]
(Recall that $H(t)+H(s)$ and $(D(H(t),H(s))^K$ are bounded).
	\item[iii)] and vi): Using once again a Taylor expansion of:
\[cor(t,s):=(2D(H(t),H(s))^K\frac{\Big(t^{H(t)+H(s)}+s^{H(t)+H(s)}\Big)^K-|t-s|^{(H(t)+H(s))K}}{s^{KH(s)}t^{KH(t)}},\]
where the leading term in this case is:
\[ K(D(H(t),H(s))^Ks^{H(t)+(1-K)H(s)}t^{-H(t)+(K-1)H(s)}\quad\text{if}\quad K(H(t)+H(s))<1,\]
and
\[  K(D(H(t),H(s))^K(H(t)+H(s))t^{KH(s)-1}s^{1-KH(s)}\quad\text{if}\quad K(H(t)+H(s))>1.\]
\end{enumerate}
\end{proof}
Since both $-H(t)$ and $KH(s)-1$ belong to $(-1,0)$ for all $t,s,$ we have the following  result:
\begin{corollary}
For all admissible $H(t)$, our process $B^{H(.),K}$ has LRD in the sense of Definition 2)b). If, for all $s$, $K(H(t)+H(s))>1$ for all sufficiently large $t,$  then $B^{H(.),K}$ has LRD in the sense of Definition 2)a), with functional LRD exponent: $\alpha(s)=KH(s)-1.$
\end{corollary}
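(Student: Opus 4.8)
The plan is to read both assertions directly off Proposition~4, the only delicate point being a uniformity issue in the part relative to Definition~2(b). The guiding observation is that, in each of the two regimes $K(H(t)+H(s))<1$ and $K(H(t)+H(s))>1$ treated in Proposition~4, the correlation $cor(t,s)$ is, for all large $t$, \emph{positive} and comparable to a negative power $t^{-\theta}$ of $t$ with exponent $\theta\in(0,1)$; and a series $\sum_k t_k^{-\theta}$ taken along an arithmetic progression $t_k=s+k\delta$ diverges precisely because $\theta<1$, which is exactly the conclusion demanded by Definition~2(b).

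\textbf{Step 1 (Definition~2(b)).} Fix $s\geq0$ and $\delta\geq0$. If $\delta=0$, then $\sum_{k\geq0}|cor(s,s)|=\sum_{k\geq0}1=+\infty$ and nothing has to be proved, so assume $\delta>0$ and set $t_k=s+k\delta$. By symmetry $cor(s,t_k)=cor(t_k,s)$, and Proposition~4(iii)--(iv) gives, for all $k$ large enough, that $cor(t_k,s)$ is positive with either $cor(t_k,s)\geq c\,t_k^{-H(t_k)}\geq c\,t_k^{-\nu}$ (when $K(H(t_k)+H(s))<1$) or $cor(t_k,s)\geq c\,t_k^{KH(s)-1}\geq c\,t_k^{K\mu-1}$ (when $K(H(t_k)+H(s))>1$). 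Put $\theta:=\max\big(\nu,\,1-K\mu\big)$; since $\nu<1$ and $K\mu>0$ one has $\theta\in(0,1)$, and in both regimes $cor(s,s+k\delta)\geq c_0\,(s+k\delta)^{-\theta}$ for all large $k$, with one positive constant $c_0$ (the minimum of the finitely many constants involved). Hence $\sum_{k\geq0}|cor(s,s+k\delta)|\geq c_0\sum_{k\geq k_0}(s+k\delta)^{-\theta}=+\infty$, since $\theta<1$. The value $K(H(t_k)+H(s))=1$ is not covered by Proposition~4, but a Taylor expansion of $\big(t^{H(t)+H(s)}+s^{H(t)+H(s)}\big)^{K}-(t-s)^{K(H(t)+H(s))}$ shows that $cov(t,s)$ then stays bounded away from $0$ and $+\infty$ as $t\to\infty$, so $cor(t,s)\thickapprox t^{-KH(t)}\geq c\,t^{-\nu}\geq c\,t^{-\theta}$, and this borderline case fits the same bound.

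\textbf{Step 2 (Definition~2(a)).} Fix $s$. By hypothesis there is $T_s$ with $K(H(t)+H(s))>1$ for every $t>T_s$, so Proposition~4(iv) yields $cor(t,s)\thickapprox t^{KH(s)-1}$ as $t\to\infty$. Writing $h=t-s\to+\infty$ and using $\big(1+s/h\big)^{KH(s)-1}\to1$, this becomes $cor(s,s+h)\thickapprox h^{KH(s)-1}$ as $h\to+\infty$. Finally $\alpha(s):=KH(s)-1\in(-1,0)$ because $K\in(0,1]$ and $H(s)\in(0,1)$ force $KH(s)\in(0,1)$; hence $B^{H(.),K}$ has LRD in the sense of Definition~2(a), with functional exponent $\alpha(s)=KH(s)-1$.

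The step I expect to be the main obstacle is Step~1: since $H(\cdot)$ may oscillate, the index $t_k$ can keep jumping between the regimes $K(H(t_k)+H(s))<1$ and $K(H(t_k)+H(s))>1$ (and hit the borderline value in between), so a lower bound valid only on a sparse subsequence of $k$'s would not suffice to make the series diverge. The resolution is to exhibit a \emph{single} exponent $\theta=\max(\nu,1-K\mu)<1$ and a single constant $c_0>0$ dominating the decay in every regime at once, which is exactly what the choice of $\theta$ and the routine borderline computation are designed to provide.
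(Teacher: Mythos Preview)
Your proof is correct and follows the same route as the paper: both read the corollary directly off Proposition~4 by noting that the relevant exponents $-H(t)$ and $KH(s)-1$ lie in $(-1,0)$. The paper's own justification is a single sentence to this effect, whereas you supply the details---in particular the uniform lower bound $cor(s,s+k\delta)\ge c_0(s+k\delta)^{-\theta}$ with $\theta=\max(\nu,1-K\mu)<1$ that survives the possible oscillation of $H(t_k)$ between the two regimes, and the borderline case $K(H(t_k)+H(s))=1$---which the paper leaves implicit.
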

\textbf{B) Asymptotic behaviour of  the covariance and the correlation  of the increments of $B^{H(.),K}$:}\vskip 0.3cm
In the following results, to simplify the notation, let us denote:
\begin{equation*}\begin{split}\textstyle &L(s,t):=\max\big(H(t) + H(s),H(t + 1) + H(s), H(t) + H(s + 1), H(t + 1) + H(s + 1)\big)\end{split}\end{equation*}
\begin{prop}Let $Y$ be the
unit time increments of $B^{H(.),K}$:$\ \ Y (t) = B^{H(t+1),K}(t+1)-B^{H(t),K}_{t}.$ Then, when $t$ tends to
infinity, and for all fixed $s\geq 0$  such that the four quantities: $H(t) + H(s),$ $ H(t +
1) + H(s),$  $ H(t) + H(s + 1),$ and $ H(t + 1) + H(s + 1)$ are all different, we have:
\begin{enumerate}
	\item [i)]  $KL(s,t) < 1 \Rightarrow \quad cov_Y (t,s)\thickapprox t^{L(s,t)(K-1)}.$
	\item [ii)] $KL(s,t) > 1 \Rightarrow \quad cov_Y (t,s) \thickapprox t^{KL(s,t)-1}.$
	\item [iii)] $\textstyle KL(s,t)< 1
	\Rightarrow \quad cor_Y (t,s) \thickapprox t^{-K\max(H(t),H(t+1))}.$
	\item [iv)] $\textstyle KL(s,t) > 1\Rightarrow \quad cor_Y (t,s) \thickapprox t^{K\max(H(s),H(s+1))-1}.$
\end{enumerate}
\end{prop}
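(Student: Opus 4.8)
The plan is to reduce everything to the asymptotics of the covariance $R^{H(.),K}$ already obtained in Proposition 4, using bilinearity of the covariance exactly as Ayache et \textit{al.} \cite{Aal} do for the increments of the standard mBm. First I would write the increment covariance as a mixed second difference,
\[\mathrm{cov}_Y(t,s)=R^{H(.),K}(t+1,s+1)-R^{H(.),K}(t+1,s)-R^{H(.),K}(t,s+1)+R^{H(.),K}(t,s),\]
and, choosing $t$ large enough that $t>s+1$ (so that every $|a-b|$ below equals $a-b$), record that
\begin{align*}
\mathbb{E}\big(Y(t)^2\big)&=R^{H(.),K}(t+1,t+1)-2R^{H(.),K}(t+1,t)+R^{H(.),K}(t,t)\\
&=(t+1)^{2KH(t+1)}+t^{2KH(t)}-2D\big(H(t+1),H(t)\big)^K\Big[\big((t+1)^{H(t+1)+H(t)}+t^{H(t+1)+H(t)}\big)^K-1\Big],
\end{align*}
using $D(H,H)=\tfrac12$ for the diagonal terms.

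Then I would feed each of the four terms of $\mathrm{cov}_Y(t,s)$ into the Taylor expansion carried out in the proof of Proposition 4: for $a\in\{t,t+1\}\to\infty$ and $b\in\{s,s+1\}$ fixed, writing $q=H(a)+H(b)$,
\[R^{H(.),K}(a,b)=D\big(H(a),H(b)\big)^K\Big[K\,b^{\,q}\,a^{\,q(K-1)}+qK\,b\,a^{\,qK-1}+o\big(a^{\,q(K-1)}\big)+o\big(a^{\,qK-1}\big)\Big],\]
whose leading power is $a^{\,q(K-1)}$ when $q<1$ and $a^{\,qK-1}$ when $q>1$. Collecting the four expansions, the hypothesis that the four sums $H(t)+H(s)$, $H(t+1)+H(s)$, $H(t)+H(s+1)$, $H(t+1)+H(s+1)$ are pairwise distinct is precisely what guarantees that no two of the resulting leading monomials share the same power of $t$, hence none of them cancels and a single one dominates; its exponent is governed by $L(s,t)=\max(H(t),H(t+1))+\max(H(s),H(s+1))$ together with the position of $KL(s,t)$ relative to $1$, giving (i) and (ii). For the correlation, the same distinctness hypothesis forces $H(t)\neq H(t+1)$, so in the variance formula the diagonal powers $(t+1)^{2KH(t+1)}$ and $t^{2KH(t)}$ grow at genuinely different rates, the larger being $\approx t^{2K\max(H(t),H(t+1))}$; its exponent strictly exceeds $K(H(t)+H(t+1))$, the exponent of the cross term $2D(\cdot)^K\big((t+1)^{q}+t^{q}\big)^K\approx 2^{K+1}D(\cdot)^K t^{K(H(t)+H(t+1))}$, so $\mathbb{E}(Y(t)^2)\approx t^{2K\max(H(t),H(t+1))}$, while $\mathbb{E}(Y(s)^2)$ is a fixed positive constant. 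Dividing $\mathrm{cov}_Y(t,s)$ by $\sqrt{\mathbb{E}(Y(t)^2)\mathbb{E}(Y(s)^2)}$ then converts (i)--(ii) into (iii)--(iv).

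The hard part will be the bookkeeping in the collection step. One must control simultaneously the subleading terms of all four expansions and the fact that the exponents $H(a)+H(b)$ themselves move with the index $a$ — so that, for instance, $t^{H(t+1)+H(b)}$ and $t^{H(t)+H(b)}$ are \emph{not} comparable up to a bounded factor — and still isolate the dominant monomial; this is exactly the point where one needs both the $\beta$-Hölder regularity of $H(.)$ with $\sup_{t\geq0}H(t)<\beta$ and the strict separation of the four exponent sums. Once the dominant monomial is pinned down, reading off its power of $t$ and assembling the four cases (i)--(iv) is routine.
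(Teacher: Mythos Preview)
Your proposal is correct and follows essentially the same route as the paper: write $\mathrm{cov}_Y$ as the bilinear second difference of $R^{H(.),K}$, feed each of the four terms into the Taylor expansion of Proposition~4, use the pairwise distinctness of the four exponent sums to rule out cancellation and isolate the dominant power, and for (iii)--(iv) combine this with $\mathbb{E}(Y(t)^2)\approx t^{2K\max(H(t),H(t+1))}$ and the identity $L(s,t)-\max(H(t),H(t+1))=\max(H(s),H(s+1))$. The paper's argument is terser and does not invoke the $\beta$-H\"older hypothesis you flag in your final paragraph, so that extra caution is yours rather than the paper's, but otherwise the two proofs coincide.
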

\begin{proof}\begin{enumerate}
		\item [i)] and ii): By definition, we have:
\[cov_Y(t,s)=cov(t+1,s+1)-cov(t+1,s)-cov(t,s+1)+cov(t,s).\]
Applying the Taylor expansion to each covariance, we obtain:
\begin{itemize}
  \item if $KL(s,t) <1$, from Proposition 4, it follow that:
  \[ cov_Y (t,s)\thickapprox t^{L(s,t)(K-1)}.\]
  \item if at least one of $K(H(t) + H(s));$  $K(H(t + 1) + H(s));$  $K(H(t) + H(s + 1))$ and $K(H(t + 1) + H(s + 1)))$ is greater than one, the order of of $cov_Y(t,s)$ will be the maximum of these value, since they all differ. More
precisely, denoting $(t', s')$ the couple where the maximum of $H(t) + H(s);$  $H(t + 1) + H(s);$  $H(t) + H(s + 1)$ and $H(t + 1) + H(s + 1)$ is attained, we get:\\
\begin{equation*}\begin{split}
cov_Y(t,s)=K(D(H(t),H(s))^K(H(t')+H(s'))s't'^{K(H(t')+H(s'))-1}+o(t'^{K(H(t')+H(s'))-1}).
\end{split}\end{equation*}
\end{itemize}
	\item [iii)] and iv): Again, this is simply obtained using Proposition 5 and the fact that $E(Y^2(t))=O(t^{2Kmax(H(t),H(t+1))})$ if $H(t)$ and $H(t + 1)$ differ (otherwise cancellation occur
	and the leading term is different). The exponent in the case where $ KL(s,t) > 1$ results from
	the identity :
	\begin{equation*}\begin{split}&\max\big(H(t) + H(s), H(t + 1) + H(s), H(t) + H(s + 1), H(t + 1) + H(s + 1)\big)\\&-\max\big(H(t); H(t + 1)\big) = max(H(s), H(s + 1)).\end{split}\end{equation*}
\end{enumerate}
\end{proof}
\begin{corollary}For all admissible $H(t)$, our process $B^{H(.),K}$ has LRD in the sense of Definition 2)b). If, for all $s$, $ KL(s,t) > 1$, for all sufficiently large $t$,  the increments of $B^{H(.),K}$ have LRD in the sense of Definition 2. As well as in the sense of Definition 2)a), with functional long range
dependence exponent $\alpha(s) = K\max(H(s),H(s + 1)) -1.$
\end{corollary}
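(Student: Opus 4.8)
The plan is to deduce this corollary from Proposition~5 in exactly the way Corollary~1 was deduced from Proposition~4: the only genuinely new point is that the exponents produced by Proposition~5 lie, once again, in $(-1,0)$. Throughout I keep in force the genericity assumption of Proposition~5, namely that for fixed $s$ the four numbers $H(t)+H(s)$, $H(t+1)+H(s)$, $H(t)+H(s+1)$, $H(t+1)+H(s+1)$ are pairwise distinct for all large $t$. The first assertion, that $B^{H(.),K}$ itself has LRD in the sense of Definition~2)b), is nothing but Corollary~1 (obtained the same way from the bounds $cor(t,s)\thickapprox t^{-H(t)}$, resp.\ $t^{KH(s)-1}$, of Proposition~4), so I concentrate on the increments $Y(t)=B^{H(t+1),K}_{t+1}-B^{H(t),K}_t$.

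First I would record that $cov_Y$, and therefore $cor_Y$, is symmetric in its two arguments; hence the asymptotics of Proposition~5 — stated for the first argument tending to $+\infty$ — apply to $cor_Y(s,s+k\delta)=cor_Y(s+k\delta,s)$ as $k\to\infty$, for fixed $s\ge0$ and $\delta>0$ (the case $\delta=0$ being trivial). For LRD in the sense of Definition~2)b) I would then split according to the sign of $KL(s,s+k\delta)-1$ and invoke Proposition~5~iii)--iv). In the regime $KL(s,s+k\delta)<1$ one has $cor_Y(s,s+k\delta)\thickapprox(s+k\delta)^{-K\max(H(s+k\delta),H(s+k\delta+1))}$, whose exponent is $\ge-K\nu\ge-\nu>-1$; in the regime $KL(s,s+k\delta)>1$ one has $cor_Y(s,s+k\delta)\thickapprox(s+k\delta)^{K\max(H(s),H(s+1))-1}$, whose exponent is $\ge K\mu-1>-1$. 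Setting $\rho=\max(\nu,1-K\mu)\in(0,1)$, this gives a constant $c>0$ with $|cor_Y(s,s+k\delta)|\ge c\,(s+k\delta)^{-\rho}$ for all large $k$, and since $\rho<1$ the series $\sum_{k\ge0}|cor_Y(s,s+k\delta)|$ diverges; as $s\ge0$ and $\delta\ge0$ were arbitrary, Definition~2)b) holds for the increments.

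For LRD in the sense of Definition~2)a) I would add the hypothesis $KL(s,t)>1$ for all sufficiently large $t$, so that only case iv) of Proposition~5 is in play. Using the symmetry noted above,
\[
cor_Y(s,s+h)\ \thickapprox\ (s+h)^{K\max(H(s),H(s+1))-1}\ \thickapprox\ h^{\alpha(s)},\qquad h\to+\infty,
\]
with $\alpha(s)=K\max(H(s),H(s+1))-1$; and since $\mu\le H(\cdot)\le\nu$, $[\mu,\nu]\subset(0,1)$ and $K\in(0,1]$ we get $0<K\mu\le K\max(H(s),H(s+1))\le K\nu\le\nu<1$, hence $-1<\alpha(s)<0$. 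This is precisely Definition~2)a) with the announced functional exponent, and it trivially re-implies Definition~2)b) in this regime.

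The main obstacle — and the reason the phrase ``for all admissible $H(.)$'' in the first increment assertion should be understood with the genericity of Proposition~5 in force — is the degenerate case where $H(t)=H(t+1)$ for a large set of $t$ (for instance $H$ locally constant): there the top-order terms in $cov_Y$ cancel, the exponents above change, and one genuinely falls back on the dichotomy of the constant-parameter case. Away from such configurations the lower bound $|cor_Y(s,s+k\delta)|\ge c\,(s+k\delta)^{-\rho}$ is uniform in $k$, and the rest of the argument is a line-by-line transcription of Proposition~5 and of the proof of Corollary~1; I would not expect any further difficulty.
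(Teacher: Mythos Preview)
Your argument is correct and follows essentially the same approach as the paper: the paper's own proof consists of the single observation that both exponents $K\max(H(s),H(s+1))-1$ and $-K\max(H(t),H(t+1))$ lie in $(-1,0)$, which is exactly the core of your argument. Your write-up simply fills in the details (symmetry of $cor_Y$, the regime split, the explicit lower bound $|cor_Y|\ge c\,t^{-\rho}$ with $\rho=\max(\nu,1-K\mu)<1$, and the caveat about the degenerate case $H(t)=H(t+1)$) that the paper leaves implicit.
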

\begin{proof}[Proof]
Obviously, both $K\max(H(s),H(s + 1))-1$ and $-K\max(H(t),H(t + 1))$ belong to
$(-1; 0).$
\end{proof}
\textbf{Conclusion and Outlook:}
\begin{enumerate}
  \item [(i)] If we can prove the local non-determinism property for our process, (see Berman \cite{Be2}),  then Theorem 2 will be interesting to prove the existence and the H\"older regularities of the local time of our process, (see \cite{Boal} in case of the mBm).
  \item [(ii)] A response of the problem of the decomposition in law of our process appeared in Remark 2 will be useful to generalize a popular results for the bfBm like the existence and the H\"older regularities of its local time, (see  \cite{Oal} in case of bfBm and \cite{Boal} in case of mBm).
  \item [(iii)] It will be interesting to study a general case of Gaussian process of the form $B^{H(.),K(.)}$ where both the parameters $H$ and $K$ depend on the time $t$.
\end{enumerate}
\end{document}